\documentclass[final,1p,times]{elsarticle}
\biboptions{sort&compress}

\usepackage{amsmath,amssymb,amsthm,mathtools}
\usepackage{bm}
\usepackage{microtype}
\usepackage{enumitem}
\usepackage[hidelinks]{hyperref}
\usepackage{graphicx}
\usepackage{float}

\newtheorem{theorem}{Theorem}[section]
\newtheorem{lemma}[theorem]{Lemma}
\newtheorem{proposition}[theorem]{Proposition}
\newtheorem{corollary}[theorem]{Corollary}
\theoremstyle{remark}
\newtheorem{remark}[theorem]{Remark}
\newtheorem{assumption}[theorem]{Assumption}

\newcommand{\R}{\mathbb R}
\newcommand{\T}{^{\mathsf T}}
\newcommand{\dd}{\,\mathrm d}
\newcommand{\Ocal}{\mathcal O}
\newcommand{\jump}[1]{\mathopen{}\left[\!\left[#1\right]\!\right]\mathclose{}}

\begin{document}
\begin{frontmatter}
\title{Shock Structure for Hyperbolic Balance Laws \\ with Evolutionary Constraints}

\author[messina]{Natale Manganaro}
\ead{natale.manganaro@unime.it}
\address[messina]{Department of Mathematical and Computer Sciences, Physical Sciences and Earth Sciences, University of Messina, Messina, Italy}

\author[bologna,lincei]{Tommaso Ruggeri\corref{cor1}}
\ead{tommaso.ruggeri@unibo.it}
\cortext[cor1]{Corresponding author.}
\address[bologna]{Department of Mathematics and Alma Mater Research Center on Applied Mathematics (AM$^2$), Alma Mater Studiorum - University of Bologna, Bologna, Italy}
\address[lincei]{Accademia Nazionale dei Lincei, Rome, Italy}

\begin{abstract}
We study travelling shock profiles for dissipative quasilinear hyperbolic balance laws endowed with a convex entropy principle and subject to involutive differential constraints.  For systems without constraints, the classical Boillat--Ruggeri result excludes continuous shock profiles whose speed exceeds the largest characteristic velocity at the equilibrium state ahead of the shock.  We show that this bound remains unchanged for homogeneous constraints and for nonhomogeneous constraints whose source Jacobian has full row rank at equilibrium; a longitudinal two-species Gaussian ten-moment plasma with Landau collisions and Gauss's law provides a physical example of the latter case.  For rank-deficient constraints, travelling-wave compatibility alone leads, after restriction to the joint conservation--constraint manifold, to a quadratic correction determined by a finite-dimensional Lyapunov equation.  For $2\times2$ systems with one conservation law, one genuine balance law and one scalar involutive constraint, we prove that a regular noncharacteristic rank-deficient tail cannot sustain a nonzero correction, so that the original Boillat--Ruggeri bound is recovered.  The remaining degenerate case is illustrated by an exact propagated two-field model in which the constraint selects the travelling speed and the smooth profile approaches equilibrium algebraically.  The same model also admits entropy-admissible Lax composite profiles containing a sub-shock, while a completely smooth connection exists between the same equilibrium states. Numerical entropy evolutions further show that, depending on the smooth compatible initial datum, the first-order model may either remain smooth or dynamically develop a sub-shock.
\end{abstract}

\begin{keyword}
hyperbolic balance laws \sep involutive constraints \sep Rational Extended Thermodynamics \sep convex entropy \sep shock structure \sep travelling waves \sep involutivity \sep Lyapunov equation
\end{keyword}
\end{frontmatter}

\begin{center}
\emph{Dedicated with great affection to Guy Boillat, whose fundamental contributions have deeply shaped these subjects, wherever he may now be.}
\end{center}

\section{Introduction}

For a dissipative hyperbolic system of balance laws endowed with a convex
entropy, Boillat and Ruggeri \cite{BoillatRuggeri1998} proved that a regular
shock structure cannot propagate faster than the largest characteristic
velocity evaluated at the equilibrium state ahead of the shock.  The proof starts from the entropy principle, which must be satisfied by every
regular solution.  If the shock speed is larger than the largest characteristic
velocity at the equilibrium state ahead of the shock, the sign of the quadratic
term in the entropy inequality is reversed.  Hence the inequality cannot be
satisfied by a nontrivial smooth profile, and a continuous shock structure is
impossible.

This result has a particularly transparent interpretation in Rational Extended
Thermodynamics (RET).  The RET description enlarges the classical fields by
introducing nonequilibrium variables and their balance laws, thereby producing
a hierarchy of hyperbolic systems with finite characteristic speeds.  In moment
theories the largest characteristic velocity at equilibrium increases as more
moments are retained; in the classical hierarchy it grows without bound,
whereas in the relativistic hierarchy it remains below the speed of light and
approaches it as the number of moments increases
\cite{BoillatRuggeri1997Moments,BoillatRuggeri1999Rel}.

The development of the shock-structure theory considered here has a somewhat
unusual history.  Ruggeri \cite{Ruggeri1993Breakdown} identified the
coincidence of the shock speed with a characteristic velocity as a possible
mechanism for loss of regularity of a continuous travelling profile in a class
of dissipative hyperbolic systems whose production can be reduced to a
conservative block and a dissipative balance-law block.  This structural
setting is substantially more general than RET, which is a specific
nonequilibrium theory for gases and other continua.  RET nevertheless provided
the first important concrete testing ground for the mechanism.  Numerical
calculations by Weiss for the 13-, 14-, 20- and 21-moment equations closed
within RET revealed a remarkable regularization: singularities associated with
intermediate characteristic velocities can be crossed because numerator and
denominator vanish simultaneously along the profile, whereas genuine breakdown
is observed only at the largest equilibrium characteristic velocity
\cite{Weiss1995,Weiss1998Shock}.  This striking behavior was one of the
motivations for the subsequent Boillat--Ruggeri theorem stated above, which
again applies to the broader conservative--dissipative block structure and is
therefore not restricted to RET.

An earlier indication that the maximum equilibrium characteristic velocity
need not mark the onset of every sub-shock was obtained by Conforto, Mentrelli
and Ruggeri for a binary mixture of multi-temperature Eulerian fluids.  They
identified Mach-number ranges below the maximum unperturbed characteristic
velocity in which one or both constituents may develop a sub-shock inside the
shock structure \cite{ConfortoMentrelliRuggeri2017}.  Later, using the inverse
construction introduced by Mentrelli and Ruggeri \cite{MentrelliRuggeri2006},
Taniguchi and Ruggeri isolated the same phenomenon in simple dissipative
hyperbolic models in which a sub-shock occurs already below the largest
equilibrium characteristic velocity
\cite{TaniguchiRuggeri2018,TaniguchiRuggeri2019}.  These latter models
nevertheless possess a convex entropy, satisfy the entropy principle and also
the Shizuta--Kawashima (K-)condition \cite{ShizutaKawashima1985}.  Thus these structural properties alone
do not explain the regularization of the intermediate characteristic
singularities observed in genuine RET moment systems.  The additional
structural mechanism responsible for this behavior remains, to our knowledge,
open.  Related sub-shock phenomena have subsequently been investigated both
in RET models
\cite{TaniguchiArimaRuggeriSugiyama2014,RuggeriTaniguchi2022,RuggeriTaniguchi2024,ArimaRuggeriTaniguchi2026}
and in Grad-type moment systems for gas mixtures
\cite{BisiConfortoMartalo2016,ArtaleConfortoMartaloRicciardello2019,BisiGroppiMacalusoMartalo2021,ArtaleConfortoMartaloRicciardello2022}.
In a complementary direction, differential-constraint reductions for
nonhomogeneous hyperbolic systems and their wave solutions have been developed
by Manganaro and coauthors
\cite{ManganaroRizzo2025,JannelliManganaroRizzo2026}; continuous shock
structures and sub-shocks for a nonhomogeneous $p$-system were recently studied
in \cite{ManganaroRizzo2026PSystem}.

The purpose of the present paper is to understand what remains of the
Boillat--Ruggeri conclusion when the field equations are supplemented by
involutive differential constraints.  The mathematical theory of hyperbolic
systems with such constraints was developed in a series of works by Boillat
and Dafermos \cite{Boillat1982,Dafermos1986,Boillat1988}; for a broad account
of Boillat's formulation, see also his C.I.M.E. lectures
\cite{BoillatCIME1996}.  Involutive constraints arise, for example, in
magnetohydrodynamics and plasma physics, including relativistic
magnetohydrodynamics \cite{BoillatRuggeri1989MHD,Davidson2001}, and in the
three-plus-one formulation of the Einstein equations
\cite{ChoquetBruhat2009,ChoquetBruhatRuggeri1983}.

Our first result is that the Boillat--Ruggeri upper-speed condition
is unchanged for homogeneous constraints and for nonhomogeneous constraints
whose source Jacobian has full row rank at equilibrium.  A longitudinal
two-species Gaussian ten-moment plasma with Landau collisions and Gauss's law
provides a physical illustration of the full-rank case.  In
Section~\ref{sec:plasma} we verify directly the symmetric-hyperbolic entropy
structure, propagation of Gauss's constraint and the associated entropy
principle.

The rank-deficient case requires more care.  If one asks only for a constrained
travelling profile, the conservative block must first be integrated and the
profile restricted to the joint conservation--constraint manifold.  On its
stable tangent space the quadratic constraint contribution is represented by a
finite-dimensional Lyapunov equation.  This gives a corrected local spatial
threshold which is a property of the constrained travelling dynamics rather
than a new characteristic velocity of the hyperbolic PDE.

The physical question is whether such a correction survives when the
constraint is genuinely propagated by the PDE.  For the minimal $2\times2$
situation covered by the conservative--dissipative block structure---one
conservation law, one genuine balance law and one scalar constraint---we prove
that it does not in the regular noncharacteristic rank-deficient regime.  Strong propagation forces the quadratic part of the
constraint source to vanish along the one-dimensional compatible stable
direction whenever the spatial decay rate is nonzero.  Consequently the
quadratic correction determined by this equation vanishes and the Boillat--Ruggeri bound is
recovered.  The only alternative is a degenerate tail for which the linear
spatial rate vanishes; this regime lies outside the regular stable-manifold reduction described above
and must be analysed directly.

An exact two-field model within this broader structural class illustrates this second possibility.  Its source
is reducible by a constant linear recombination to one conservation law and one
balance law, its differential constraint is propagated exactly by the PDE, and
its entropy production is nonpositive throughout the constitutive domain.  The
constraint selects the travelling speed $s=1$.  A smooth heteroclinic profile
exists, but its approach to the equilibrium state is algebraic rather than
exponential.  The same model also possesses entropy-admissible Lax composite
profiles.  Moreover, the constrained evolution reduces exactly to a scalar
entropy problem, so that the nonuniqueness of the travelling-wave boundary-value
problem does not contradict uniqueness of the Cauchy problem.  Numerical
evolutions from two different smooth compatible initial data show respectively
persistence of a smooth structure and dynamical formation of a sub-shock.

The resulting picture is therefore different from a simple replacement of
$\lambda_{\max}$ by a new universal speed.  The Boillat--Ruggeri local bound is
robust under large classes of involutive constraints, and in the minimal
propagated rank-deficient two-field case it remains unchanged for every regular
noncharacteristic tail.  What the constraints add is a global compatibility
problem: they can select the shock speed, remove otherwise admissible orbits,
and create characteristic barriers for particular branches even when the
classical equilibrium inequality is satisfied.

The paper is organized as follows.  We introduce the entropy formalism for involutive constraints and the conservative--dissipative block structure, derive the exact
finite-amplitude entropy inequality, and recover the classical bound in the
full-rank and homogeneous cases.  After the plasma example we develop the
general rank-deficient travelling reduction and the quadratic correction determined by the associated Lyapunov equation.  We
then impose full PDE propagation and prove the two-field rigidity result.
The final section gives an exact propagated two-field model exhibiting speed
selection, a degenerate algebraic tail, coexistence of smooth and composite
profiles, and their dynamical realization through the associated scalar entropy
evolution.

\section{Balance laws, constraints and entropy principle}

Let
\[
U=U(x^1,\ldots,x^k,t)\in\mathcal D\subset\R^N,\qquad U_0\in\operatorname{int}\mathcal D,
\qquad
x=(x^1,\ldots,x^k)\in\R^k,
\]
where $t$ denotes time and $x^i$ ($i=1,\ldots,k$) are the spatial coordinates.
We use the notation
\[
\partial_t:=\frac{\partial}{\partial t},
\qquad
\partial_i:=\frac{\partial}{\partial x^i},
\]
and adopt the Einstein summation convention over repeated spatial indices.
We consider the quasilinear system of balance laws
\begin{equation}
 \partial_t F(U)+\partial_iF^i(U)=f(U),
 \label{eq:balance}
\end{equation}

We supplement \eqref{eq:balance} with $M<N$ involutive differential constraints
\begin{equation}
 \partial_iR^i(U)=q(U).
 \label{eq:constraint}
\end{equation}
Here
\[
F,F^i,f\in\R^N,\qquad
R^i,q\in\R^M.
\]
We restrict attention to genuinely dissipative balance laws, since purely
conservative systems do not provide the smooth shock structures considered
here.
Throughout the local analysis we assume that the constitutive maps entering
\eqref{eq:balance}--\eqref{eq:multipliers} are of class $C^3$ in a
neighborhood of the equilibrium states considered.  This regularity is used
only to justify the second-order Taylor expansions with cubic remainders.

In the mathematical theory of hyperbolic systems it is customary to use the
negative of the physical entropy density as the entropy function. Accordingly,
the mathematical entropy $h$ is convex and its production is written with the
sign $\Sigma\le0$, whereas the physical entropy is concave and its production
is nonnegative. In isothermal RET models the corresponding convex mathematical
entropy is typically represented by the total energy.
With this convention, assume that there exists a supplementary entropy law
\begin{equation}
 \partial_t h(U)+\partial_i h^i(U)=\Sigma(U)\le0.
 \label{eq:entropy}
\end{equation}
For the treatment of the entropy principle in the presence of involutive
constraints we follow Boillat \cite{Boillat1982,Boillat1988,BoillatCIME1996},
together with the main-field formalism of Ruggeri and Strumia
\cite{RuggeriStrumia1981}.  We assume the existence of a main field
$U'\in\R^N$ and a constraint multiplier $V\in\R^M$ such that
\begin{equation}
 dh=U'\cdot dF,
 \qquad
 dh^i=U'\cdot dF^i+V\cdot dR^i,
 \qquad
 \Sigma=U'\cdot f+V\cdot q\le0.
 \label{eq:multipliers}
\end{equation}

We assume that the entropy $h=h(F)$ is strictly convex with respect to the
densities $F$.  The first identity in \eqref{eq:multipliers} then gives
\[
 U'=\frac{\partial h}{\partial F},
\]
so that the map $F\mapsto U'$ is globally one-to-one on the state space under consideration and $U'$ may be used as dual variables on its image.  Introduce
the potentials
\begin{equation}
 h'=U'\cdot F-h,
 \qquad
 h'^i=U'\cdot F^i+V\cdot R^i-h^i.
 \label{eq:potentials}
\end{equation}
Thus $h'$ is the Legendre transform of $h$ with respect to $F$, with dual
variable $U'$, and
\begin{equation}
 F=\frac{\partial h'}{\partial U'},
 \qquad
 F^i=
 \frac{\partial h'^i}{\partial U'}
 -
 R^i\cdot\frac{\partial V}{\partial U'}.
 \label{eq:potentialrelations}
\end{equation}

From now on we use the main field as independent variable and write
\[
U\equiv U'.
\]
Throughout the paper, a comma followed by $U$ denotes differentiation with
respect to the main field. Thus $q_{,U}(U):\R^N\to\R^M$ is the Jacobian of
$q$, while $q_{,UU}(U)$ denotes its second derivative. The symbol $d$ is reserved for differentials with respect to the main-field
variables in identities such as \eqref{eq:multipliers}.
Derivatives with respect to an individual component are indicated in the same
way by the corresponding index after the comma.

Using \eqref{eq:potentialrelations} in the balance laws and the differential
constraint $\partial_iR^i=q$ to eliminate the terms containing
$\partial_iR^i$, we write the symmetric main-field system as
\[
 H(U)\,\partial_tU+A^i(U)\,\partial_iU
 =f(U)+V_{,U}(U)^{\mathsf T}q(U),
\]
where we have set
\begin{equation}
 H:=h'_{,UU},
 \qquad
 A^i:=h'^i_{,UU}-R^i\cdot V_{,UU}.
 \label{eq:HA}
\end{equation}
Since $h$ is strictly convex with respect to $F$, its Legendre transform $h'$
is strictly convex with respect to the main field $U$. Hence
\begin{equation}
 H(U)=h'_{,UU}(U)>0
 \label{eq:sym}
\end{equation}
throughout the constitutive domain under consideration.  The matrices $A^i$
are symmetric by construction, and the principal system is therefore
symmetric hyperbolic; in particular, for every unit direction $n$ the
generalized characteristic roots defined below are real.

For a unit vector $n=(n_i)$ define
\[
F_n=F^in_i,\qquad
R_n=R^in_i,\qquad
h_n=h^in_i,\qquad
h_n'=h'^in_i,\qquad
A_n=A^in_i.
\]
Subscripts preceding the comma retain their original meaning. For example,
$R_{n,U}$ and $h'_{n,UU}$ denote derivatives of $R_n$ and $h'_n$,
respectively.

The characteristic velocities of the constrained symmetric system are the
roots of
\begin{equation}
 \det(A_n-\lambda H)=0.
 \label{eq:char}
\end{equation}
We denote the largest one by $\lambda_{\max}(U,n)$.

\section{Conservative--dissipative block structure and equilibrium}
\label{sec:weakdiss}

The analysis below requires only that, after a constant nonsingular
recombination of the field equations, the production split into a conservative
block and a dissipative block of genuine balance laws.  The class of systems
considered here includes Rational Extended Thermodynamics (RET) as an important
physical subclass.  In RET, the conservative block typically contains mass,
momentum and total energy, while the additional nonequilibrium variables
satisfy balance laws on which the production acts.  The precise assumption used
below is the following.

\begin{assumption}[Conservative--dissipative block structure]
\label{ass:RET}
Up to a constant nonsingular recombination of the $N$ field equations, the
production term of system \eqref{eq:balance} has the block form
\begin{equation}
 f(U)=\binom{0}{g(U)},
 \qquad g(U)\in\mathbb R^{N-r}.
 \label{eq:RET-source}
\end{equation}
Accordingly, the first $r$ recombined equations are genuine conservation laws,
while the remaining $N-r$ equations are balance laws for the extended
variables.  We write the corresponding main field as $U=(v,w)$, with
$v\in\mathbb R^r$ and $w\in\mathbb R^{N-r}$.
\end{assumption}

Following M\"uller and Ruggeri \cite{MullerRuggeri1998}, we use the standard
thermodynamic definition of equilibrium.  \emph{At equilibrium the physical
entropy production vanishes and attains its minimum value.  Equivalently, with
the sign convention $\Sigma\le0$ adopted here, $\Sigma$ vanishes and
attains its maximum value, namely zero.}  In the classical unconstrained RET setting, the
strict dissipativity condition
\[
 D(v,w):=\frac12\left(g_{,w}+g_{,w}^{\mathsf T}\right)<0
\]
implies $w=0$ at equilibrium.  In the classical unconstrained setting, the
condition $w=0$ is also consistent with the notion of an equilibrium principal
subsystem \cite{BoillatRuggeri1997Principal}.  In the constrained setting
considered here we retain only this pointwise equilibrium property as an
explicit hypothesis.

\begin{assumption}[Equilibrium normalization]
\label{ass:equilibrium}
In the representation of Assumption~\ref{ass:RET}, the reference equilibrium
admits the normalization
\begin{equation}
 U_0=(v_0,0),
 \qquad f(U_0)=0,
 \qquad q(U_0)=0.
 \label{eq:RET-equilibrium}
\end{equation}
Equivalently, the nonequilibrium component of the main field vanishes at the
reference equilibrium.  This is the only equilibrium-subsystem property
required below.
\end{assumption}

The formulation is invariant under the constant recombination used to expose
this block structure.  Indeed, if the field equations are multiplied by an
invertible constant matrix $T$, then $F$, $F^i$, and $f$ are multiplied by $T$,
while the main field is replaced by $T^{-\mathsf T}U$, leaving all multiplier
identities invariant.

Assumptions~\ref{ass:RET} and \ref{ass:equilibrium} immediately yield
\begin{equation}
 U_0\cdot f(U)\equiv0.
 \label{eq:orthf}
\end{equation}
The entropy production in main-field variables is therefore
\begin{equation}
 \Sigma(U)=U\cdot f(U)+V(U)\cdot q(U)\le0,
 \label{eq:Sigma}
\end{equation}
which can be rewritten as
\begin{equation}
 \Sigma(U)=(U-U_0)\cdot f(U)+V(U)\cdot q(U)\le0.
 \label{eq:Sigma-TW-identity}
\end{equation}
The role of Assumption~\ref{ass:equilibrium} is precisely to ensure, together
with Assumption~\ref{ass:RET}, the orthogonality relation \eqref{eq:orthf},
which allows the entropy production \eqref{eq:Sigma} to be written in the
relative form \eqref{eq:Sigma-TW-identity}.  This equilibrium normalization is
satisfied by the physical and model examples considered below.

\begin{lemma}[Equilibrium consequence of the entropy principle]
\label{lem:weakstationarity}
Under Assumptions~\ref{ass:RET} and \ref{ass:equilibrium},
\begin{equation}
 V_0\cdot q_{,U}(U_0)=0,
 \qquad V_0:=V(U_0),
 \label{eq:VDq}
\end{equation}
and
\begin{equation}
 \Sigma_{,UU}(U_0)\le0
 \label{eq:weakHessian}
\end{equation}
as a quadratic form.
\end{lemma}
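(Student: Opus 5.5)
The plan is to read the lemma as a first- and second-order optimality condition. By \eqref{eq:Sigma} and \eqref{eq:multipliers}, $\Sigma\le0$ on the whole constitutive domain. At the reference equilibrium, Assumption~\ref{ass:equilibrium} gives $f(U_0)=0$ and $q(U_0)=0$, so
\[
\Sigma(U_0)=U_0\cdot f(U_0)+V_0\cdot q(U_0)=0 .
\]
Since $U_0\in\operatorname{int}\mathcal D$ and $\Sigma$ is $C^2$ (the constitutive maps are $C^3$), $U_0$ is an interior maximum point of $\Sigma$. The two statements will then follow from $\Sigma_{,U}(U_0)=0$ and from the second-order necessary condition.

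For \eqref{eq:VDq}, I would work with the relative form \eqref{eq:Sigma-TW-identity}, which uses the orthogonality \eqref{eq:orthf}. Differentiating $\Sigma(U)=(U-U_0)\cdot f(U)+V(U)\cdot q(U)$ at $U_0$ gives
\[
\Sigma_{,U}(U_0)=f(U_0)+(U-U_0)\cdot f_{,U}\big|_{U_0}+q(U_0)\cdot V_{,U}(U_0)+V_0\cdot q_{,U}(U_0).
\]
The first three terms vanish, because $f(U_0)=0$, the factor $U-U_0$ vanishes at $U_0$, and $q(U_0)=0$. Interior maximality forces $\Sigma_{,U}(U_0)=0$, so $V_0\cdot q_{,U}(U_0)=0$.

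For \eqref{eq:weakHessian}, I would Taylor expand to second order with a cubic remainder:
\[
\Sigma(U_0+\varepsilon\xi)=\tfrac12\varepsilon^2\,\xi\cdot\Sigma_{,UU}(U_0)\xi+\Ocal(\varepsilon^3)\le0 .
\]
Dividing by $\varepsilon^2$ and letting $\varepsilon\to0$ gives $\xi\cdot\Sigma_{,UU}(U_0)\xi\le0$ for every $\xi$. For later use, the same expansion identifies the Hessian explicitly:
\[
\Sigma_{,UU}(U_0)\xi\cdot\xi=2\,\xi\cdot f_{,U}(U_0)\xi+2\,\xi\cdot V_{,U}(U_0)^{\mathsf T}q_{,U}(U_0)\xi+V_0\cdot q_{,UU}(U_0)[\xi,\xi].
\]
This is not needed for the inequality itself.

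The only delicate point is whether $\Sigma\le0$ holds on a full neighbourhood of $U_0$, or only on states compatible with the constraint. In the formalism of \eqref{eq:multipliers}, $\Sigma$ is a pointwise function of $U$ and the inequality is assumed throughout the constitutive domain, so the unconstrained interior-maximum argument applies. I would state this explicitly in the proof. The interiority $U_0\in\operatorname{int}\mathcal D$ is already given, and the $C^3$ hypothesis is more than enough for the Taylor step.
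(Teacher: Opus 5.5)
Your proposal is correct and follows essentially the same route as the paper. In both, $U_0$ is an interior maximum of $\Sigma$ with $\Sigma(U_0)=0$, which gives a vanishing gradient and a negative semidefinite Hessian, and the orthogonality \eqref{eq:orthf} removes the production contribution to $\Sigma_{,U}(U_0)$. The paper differentiates \eqref{eq:Sigma} and uses $U_0\cdot f_{,U}(U_0)=0$, while you absorb this by differentiating the relative form \eqref{eq:Sigma-TW-identity}; your explicit Taylor step and Hessian formula are extra detail the paper leaves implicit.
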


\begin{proof}
By \eqref{eq:RET-equilibrium}, $\Sigma(U_0)=0$.  Since
$\Sigma\le0$ in a neighborhood of $U_0$, the equilibrium is a local maximum,
so $\Sigma_{,U}(U_0)=0$ and $\Sigma_{,UU}(U_0)\le0$.  Differentiating
\eqref{eq:Sigma} at $U_0$ gives
\[
 \Sigma_{,U}(U_0)
 =U_0\cdot f_{,U}(U_0)+V_0\cdot q_{,U}(U_0).
\]
The first term vanishes identically by \eqref{eq:orthf}, and
\eqref{eq:VDq} follows.
\end{proof}

\section{Travelling shock structures}

A plane shock structure is a $C^1$ travelling wave
\begin{equation}
 U=U(\phi),
 \qquad
 \phi=n_ix_i-st,
 \label{eq:TW}
\end{equation}
satisfying
\begin{equation}
 \lim_{\phi\to-\infty}U(\phi)=U_1,
 \qquad
 \lim_{\phi\to+\infty}U(\phi)=U_0,
 \qquad
 \lim_{\phi\to\pm\infty}\dot U(\phi)=0,
 \label{eq:bc}
\end{equation}
where $U_0$ is the unperturbed equilibrium state ahead of the shock and $U_1$
the state behind it.
For piecewise-smooth profiles we denote the jump of a quantity $a$ across a
discontinuity by $\jump{a}:=a^+-a^-$.

The balance laws \eqref{eq:balance} reduce to
\begin{equation}
 \frac{\dd}{\dd\phi}
 \{-sF(U)+F_n(U)\}
 =
 f(U),
 \label{eq:TWbalance}
\end{equation}
while the involutive constraints \eqref{eq:constraint} become
\begin{equation}
 \frac{\dd}{\dd\phi}R_n(U)=q(U).
 \label{eq:TWconstraint}
\end{equation}

We denote differentiation with respect to the travelling-wave variable $\phi$ by a dot,
\[
 \dot U:=\frac{\dd U}{\dd\phi}.
\]
Along a travelling wave, the differential constraints therefore read
\begin{equation}
 R_{n,U}(U)\,\dot U=q(U).
 \label{eq:TWconstraint-differential}
\end{equation}
Hence $R_n(U)$ is constant along the profile only in the homogeneous case $q\equiv0$; for nonhomogeneous constraints, the variation of $R_n(U)$ along the profile is determined by the source $q(U)$.

\section{Exact constrained entropy inequality}

Multiply \eqref{eq:TWbalance} by $U-U_0$.  Using the multiplier identities
\eqref{eq:multipliers}, the travelling constraint
\eqref{eq:TWconstraint}, and the identity \eqref{eq:Sigma-TW-identity}, one obtains
\begin{align}
 &\frac{\dd}{\dd\phi}
 \left\{
 -sh(U)+h_n(U)
 -
 U_0\cdot[-sF(U)+F_n(U)]
 \right\}
 \nonumber\\
 &\qquad
 =
 (U-U_0)\cdot f(U)+V(U)\cdot q(U)
 \le0.
 \label{eq:key}
\end{align}

Since $U(\phi)\to U_0$ as $\phi\to+\infty$, integrating
\eqref{eq:key} from a point $\phi$ of the profile to $+\infty$ gives
\begin{align}
 &-s[h(U)-h(U_0)]
 +h_n(U)-h_n(U_0)
 \nonumber\\
 &\quad
 -U_0\cdot
 \{-s[F(U)-F(U_0)]
 +F_n(U)-F_n(U_0)\}
 \ge0.
 \label{eq:integrated}
\end{align}

Using the potentials \eqref{eq:potentials}, this is equivalent to
\begin{align}
 \mathcal E(U,U_0):={}&
 s[h'(U)-h'(U_0)]
 -[h_n'(U)-h_n'(U_0)]
 \nonumber\\
 &+(U-U_0)\cdot
 \left[
 -s h'_{,U}(U)+h'_{n,U}(U)
 \right]
 \nonumber\\
 &+V(U)\cdot R_n(U)
 -V(U_0)\cdot R_n(U_0)
 \nonumber\\
 &-(U-U_0)\cdot
 \left[
 R_n(U)\cdot V_{,U}(U)
 \right]
 \ge0.
 \label{eq:E}
\end{align}

Equation \eqref{eq:E} is the constrained finite-amplitude analogue of the
basic inequality used in the unconstrained Boillat--Ruggeri theorem.

\section{Local expansion at the equilibrium state}

Set
\[
 X:=U-U_0,
 \qquad
 V_0:=V(U_0),
 \qquad
 R_0:=R_n(U_0).
\]

We expand
\begin{align}
 V(U)
 &=V_0+V_{,U}(U_0)X
   +\frac12V_{,UU}(U_0)[X,X]+\Ocal(|X|^3),
 \label{eq:Vexp}
 \\
 R_n(U)
 &=R_0+R_{n,U}(U_0)X
   +\frac12R_{n,UU}(U_0)[X,X]+\Ocal(|X|^3).
 \label{eq:Rexp}
\end{align}

The part of \eqref{eq:E} involving $h'$ and $h_n'$ gives
\begin{equation}
 \frac12X\T
 [h'_{n,UU}(U_0)-sH(U_0)]
 X
 +\Ocal(|X|^3).
 \label{eq:hpart}
\end{equation}

The constraint part satisfies
\begin{align}
 &V(U)\cdot R_n(U)-V_0\cdot R_0
 -X\cdot[R_n(U)\cdot V_{,U}(U)]
 \nonumber\\
 &\qquad
 =V_0\cdot R_{n,U}(U_0)X
 +\frac12V_0\cdot R_{n,UU}(U_0)[X,X]
 \nonumber\\
 &\qquad\phantom{=}
 -\frac12R_0\cdot V_{,UU}(U_0)[X,X]
 +\Ocal(|X|^3).
 \label{eq:constraintpart}
\end{align}
The mixed quadratic terms containing $V_{,U}(U_0)$ and $R_{n,U}(U_0)$ cancel
exactly.

Since
\begin{equation}
 A_n(U_0)
 =h'_{n,UU}(U_0)-R_0\cdot V_{,UU}(U_0),
 \label{eq:A0}
\end{equation}
we arrive at the basic expansion
\begin{equation}
 \begin{aligned}
 \mathcal E(U,U_0)
 &=V_0\cdot R_{n,U}(U_0)X \\
 &\quad+\frac12 X\T
 \left[A_n(U_0)-sH(U_0)+V_0\cdot R_{n,UU}(U_0)\right]X \\
 &\quad+\Ocal(|X|^3)\ge0.
 \end{aligned}
 \label{eq:master}
\end{equation}

This formula is the central local identity for the constrained
shock-structure problem.

There is a more useful equivalent form along an actual travelling profile.
Define
\begin{equation}
 \rho(\phi):=V_0\cdot\bigl[R_n(U(\phi))-R_n(U_0)\bigr].
 \label{eq:rho}
\end{equation}
Taylor expansion gives
\[
 \rho
 =V_0\cdot R_{n,U}(U_0)X
 +\frac12V_0\cdot R_{n,UU}(U_0)[X,X]
 +\Ocal(|X|^3).
\]
Hence \eqref{eq:master} is equivalent, along the profile, to
\begin{equation}
 \mathcal E(U,U_0)
 =
 \rho(\phi)
 +\frac12X\T[A_n(U_0)-sH(U_0)]X
 +\Ocal(|X|^3)\ge0.
 \label{eq:profilemaster}
\end{equation}
Thus the curvature $R_{n,UU}(U_0)$ does not produce an independent quadratic
term in the speed condition: it is already contained in $\rho$.  Moreover, the travelling
constraint implies the exact identity
\begin{equation}
 \rho(\phi)
 =-
 \int_{\phi}^{+\infty}V_0\cdot q(U(\psi))\,\dd\psi,
 \label{eq:rhoexact}
\end{equation}
whenever the integral converges.  Formulae \eqref{eq:profilemaster} and
\eqref{eq:rhoexact} are the convenient starting point for all three cases
below.

\section{Nonhomogeneous constraints with full-rank source Jacobian}
\label{sec:nondegenerate}

Lemma~\ref{lem:weakstationarity} gives the equilibrium stationarity relation
\[
 V_0\cdot q_{,U}(U_0)=0.
\]
If the constraint-source Jacobian has full rank at equilibrium, this already forces the
constraint multiplier to vanish.

\begin{proposition}[Vanishing of the equilibrium constraint multiplier]
Assume
\begin{equation}
 \operatorname{rank}q_{,U}(U_0)=M.
 \label{eq:rank}
\end{equation}
Then
\begin{equation}
 V_0=0.
 \label{eq:V0zero}
\end{equation}
\end{proposition}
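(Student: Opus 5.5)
The plan is to combine the stationarity relation \eqref{eq:VDq} of Lemma~\ref{lem:weakstationarity} with the rank hypothesis \eqref{eq:rank}, reading the identity as a statement about the transpose of the source Jacobian.

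First, recall that Assumptions~\ref{ass:RET} and \ref{ass:equilibrium} are in force. Lemma~\ref{lem:weakstationarity} then gives
\[
 V_0\cdot q_{,U}(U_0)=0 .
\]
Write $Q:=q_{,U}(U_0)$, an $M\times N$ matrix. The identity says that the row vector $V_0^{\mathsf T}Q$ vanishes, that is $Q^{\mathsf T}V_0=0$. Equivalently, $V_0$ lies in the kernel of the linear map $Q^{\mathsf T}:\R^M\to\R^N$.

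Second, invoke \eqref{eq:rank}. Since $\operatorname{rank}Q=M$, the $M$ rows of $Q$ are linearly independent. Hence $Q^{\mathsf T}$ has rank $M$ on an $M$-dimensional domain, and by rank--nullity its kernel is trivial. Therefore $V_0=0$, which is \eqref{eq:V0zero}. This is consistent with $M<N$: full row rank is attainable and is exactly what injectivity of $Q^{\mathsf T}$ requires.

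There is no real obstacle once Lemma~\ref{lem:weakstationarity} is available. The substantive input is the first-order stationarity $\Sigma_{,U}(U_0)=0$, which comes from $\Sigma\le0$ near $U_0$, $\Sigma(U_0)=0$, and the orthogonality \eqref{eq:orthf}. The only points needing care are bookkeeping:
\begin{itemize}
\item the contraction $V_0\cdot q_{,U}(U_0)$ is taken over the constraint index, so it is a left multiplication by $V_0^{\mathsf T}$;
\item ``full rank'' must mean full row rank $M$, not rank $N$.
\end{itemize}

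I would also note that the conclusion is invariant under the constant recombination of Section~\ref{sec:weakdiss}. That recombination does not act on the constraint equations, so neither $V$ nor $q$ is affected by it.
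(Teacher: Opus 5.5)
Your proof is correct and is essentially the paper's own argument. The stationarity relation $V_0\cdot q_{,U}(U_0)=0$ from Lemma~\ref{lem:weakstationarity} places $V_0$ in the left kernel of $q_{,U}(U_0)$, and full row rank $M$ makes that kernel trivial, so $V_0=0$.
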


\begin{proof}
The map $q_{,U}(U_0):\R^N\to\R^M$ has full row rank.  Hence its left kernel is
trivial, and \eqref{eq:VDq} implies $V_0=0$.
\end{proof}

With $V_0=0$, the master expansion \eqref{eq:master} reduces to
\begin{equation}
 \mathcal E(U,U_0)
 =
 \frac12
 X\T[A_n(U_0)-sH(U_0)]X
 +
 \Ocal(|X|^3)\ge0.
 \label{eq:reducedA}
\end{equation}

\begin{theorem}[Nonhomogeneous constraints with full-rank source Jacobian]
\label{thm:nondegenerate}
Let Assumptions~\ref{ass:RET} and \ref{ass:equilibrium} hold, let $H(U_0)>0$, and assume
\begin{equation}
 \operatorname{rank}q_{,U}(U_0)=M.
\end{equation}
Then a nontrivial $C^1$ shock structure approaching $U_0$ can exist only if
\begin{equation}
 s\le\lambda_{\max}(U_0,n).
 \label{eq:boundA}
\end{equation}
\end{theorem}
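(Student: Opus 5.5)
We argue by contradiction, starting from the reduced inequality \eqref{eq:reducedA}. By the Proposition, the full-rank hypothesis \eqref{eq:rank} gives $V_0=0$. Hence along any nontrivial $C^1$ shock structure with $U(\phi)\to U_0$ as $\phi\to+\infty$, the exact finite-amplitude inequality \eqref{eq:E} becomes
\[
 \tfrac12 X(\phi)\T[A_n(U_0)-sH(U_0)]X(\phi)+\Ocal(|X(\phi)|^3)\ge0,
 \qquad X(\phi)=U(\phi)-U_0 .
\]
This holds for every $\phi$ on the profile. Suppose, contrary to the claim, that $s>\lambda_{\max}(U_0,n)$.

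The first step is the matrix bound. Since $H(U_0)>0$ and $A_n(U_0)$ is symmetric, the pencil $A_n(U_0)-\lambda H(U_0)$ can be simultaneously diagonalised. In $H(U_0)$-orthonormal coordinates the eigenvalues of $A_n(U_0)-sH(U_0)$ are $\lambda_j-s$, where the $\lambda_j$ are the roots of \eqref{eq:char} at $U_0$. All of these are negative. Thus
\[
 X\T[A_n(U_0)-sH(U_0)]X\le-(s-\lambda_{\max})\,X\T H(U_0)X\le -c|X|^2
\]
for some constant $c>0$ depending only on $U_0$, $n$ and $s$.

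The second step is to pass to the tail. The profile is nontrivial and tends to $U_0$, so $X(\phi)\ne0$ for a sequence $\phi_k\to+\infty$ with $|X(\phi_k)|\to0$. If instead $X\equiv0$ on a half-line, one uses $\phi\to-\infty$ or uniqueness of the travelling ODE (see the last paragraph). The cubic remainder is uniform on a neighbourhood of $U_0$ by the $C^3$ regularity. Therefore, for $k$ large,
\[
 -\tfrac{c}{2}|X(\phi_k)|^2+C|X(\phi_k)|^3<0,
\]
which contradicts the inequality above. Hence $s\le\lambda_{\max}(U_0,n)$.

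The step I expect to need the most care is the existence of points $\phi_k\to+\infty$ with $X(\phi_k)\neq0$; that is, ruling out a profile that reaches $U_0$ at a finite $\phi$ and stays there. I would first observe that the inequality \eqref{eq:integrated} holds at every point of the profile. Taking a last point $\phi^*$ at which $X(\phi^*)\ne0$ but $|X(\phi^*)|$ is small gives the same contradiction, because $|X|$ is continuous and small near the contact point. This covers that case without any ODE uniqueness argument. The point is simply that any nontrivial profile contains points $\phi$ with $0<|X(\phi)|<\delta$ for every $\delta>0$, by continuity. Those points are all the argument requires.
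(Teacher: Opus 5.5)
Your proof is correct and follows the paper's own argument. The full-rank proposition gives $V_0=0$, which reduces the expansion to the classical quadratic form; for $s>\lambda_{\max}(U_0,n)$ the matrix $A_n(U_0)-sH(U_0)$ is negative definite, and this contradicts $\mathcal E\ge0$ near $U_0$. Your explicit constant $c$ and the continuity argument giving points with $0<|X(\phi)|<\delta$ spell out what the paper leaves implicit; the aside about using $\phi\to-\infty$ is unnecessary and can be dropped.
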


\begin{proof}
By Lemma~\ref{lem:weakstationarity} and the previous proposition,
$V_0=0$, so \eqref{eq:reducedA} holds.  Assume, by contradiction,
\[
s>\lambda_{\max}(U_0,n).
\]
Since $s>\lambda_{\max}(U_0,n)$, the symmetric matrix
$A_n(U_0)-sH(U_0)$ is negative definite.  Hence the leading quadratic term
in \eqref{eq:reducedA} is negative for every nonzero $X$, and therefore
$\mathcal E(U,U_0)<0$ sufficiently close to $U_0$ along any nontrivial
profile.  This contradicts the entropy inequality $\mathcal E\ge0$.  Hence
$s\le\lambda_{\max}(U_0,n)$.
\end{proof}

\section{A one-dimensional dissipative plasma example}
\label{sec:plasma}

We now verify in some detail that Theorem~\ref{thm:nondegenerate} applies to a
standard ten-moment plasma closure.  The model may be viewed as a finite-moment
reduction of a mesoscopic kinetic description: for each species the
distribution function satisfies a Vlasov--Fokker--Planck--Landau equation,
coupled with Maxwell's equations, and taking velocity moments generates an
infinite hierarchy of balance equations.  The hierarchy is closed here at the
level of the second velocity moment by the maximum-entropy principle (MEP).
A closely related derivation of extended plasma moment systems directly from
the Fokker--Planck--Landau equation coupled with Maxwell's equations was given
by Al\`i, Mascali, Pezzi and Valentini \cite{AliMascaliPezziValentini2023}.

The MEP has been used in many areas of statistical mechanics and continuum
modelling.  In the context of moment equations for the Boltzmann equation, its
systematic use as a closure procedure goes back to Kogan.  M\"uller and Ruggeri
subsequently showed that, whenever the relevant moments are finite, the fully
nonlinear MEP closure yields a symmetric-hyperbolic system in which the
Lagrange multipliers constitute the main field.  Levermore later developed a
rigorous entropy-based hierarchy of moment closures and analysed their
admissibility
\cite{Kogan1969,MullerRuggeri1993,Levermore1996,ArimaRuggeri2025MEP}.
At the ten-moment level, maximization of the kinetic entropy under prescribed
density, momentum and symmetric second moment yields an anisotropic Gaussian.
This Gaussian closure is fully nonlinear and nonperturbative, and its convex
entropy structure is defined throughout the admissible realizability domain;
see also the detailed ten-moment analysis in
\cite{LevermoreMorokoff1998}.

This point is relevant for the present application.  The explicit
thirteen-moment plasma closure of Al\`i et al. is obtained by expanding the MEP
distribution to first order in the anisotropy, an approximation that reduces
the hyperbolicity region.  We therefore retain here the fully nonlinear
Gaussian ten-moment closure, which does not require a near-equilibrium
expansion.

Let $a\in\{e,i\}$ label electrons and ions.  For each species, $n_a$ is the
number density, $m_a$ the particle mass, $q_a$ the electric charge,
$\boldsymbol u_a\in\mathbb R^3$ the macroscopic velocity, and $\Theta_a>0$
the symmetric velocity-covariance tensor.  The pressure tensor is
$P_a=m_an_a\Theta_a$.  With $\boldsymbol{\xi}\in\mathbb R^3$ denoting
molecular velocity, the corresponding maximum-entropy distribution is
\begin{equation}
 \begin{aligned}
 f_a^G(\boldsymbol{\xi})
 &=\frac{n_a}{(2\pi)^{3/2}\sqrt{\det\Theta_a}}
 \exp\!\left[-\frac12(\boldsymbol{\xi}-\boldsymbol u_a)\T
 \Theta_a^{-1}(\boldsymbol{\xi}-\boldsymbol u_a)\right],\\
 P_a&=m_an_a\Theta_a>0.
 \end{aligned}
 \label{eq:gaussian}
\end{equation}

For the Gaussian distribution \eqref{eq:gaussian}, this identification can be
seen directly from
\begin{equation}
 \log\frac{f_a^G}{y_a}
 =\alpha_a+\boldsymbol\beta_a\cdot\boldsymbol\xi
   +\boldsymbol\xi\cdot\Gamma_a\boldsymbol\xi,
 \qquad
 \boldsymbol\beta_a=\Theta_a^{-1}\boldsymbol u_a,
 \qquad
 \Gamma_a=-\frac12\Theta_a^{-1},
 \label{eq:plasmaMultipliers}
\end{equation}
where
\[
 \alpha_a=\log\!\frac{n_a}{y_a(2\pi)^{3/2}\sqrt{\det\Theta_a}}
 -\frac12\boldsymbol u_a\cdot\Theta_a^{-1}\boldsymbol u_a .
\]
Thus, apart from the fixed normalizations determined by the chosen moment
variables, $(\alpha_a,\boldsymbol\beta_a,\Gamma_a)$ are the entropy
multipliers, hence the components of the main field for the ten retained
moments.  For the plasma application considered here, the collisional source
is not taken from the gas-dynamic model: it is obtained by evaluating the
multispecies Landau operator on the Gaussians and retaining the corresponding
ten moments \cite{Pfefferle2017}.

For the one-dimensional longitudinal reduction we write
$\boldsymbol u_a=(u_a,0,0)$ and
$P_a=\operatorname{diag}(P_{\parallel a},P_{\perp a},P_{\perp a})$, where
$P_{\parallel a}$ and $P_{\perp a}$ are respectively the longitudinal and
transverse pressures.  The electric field is $\boldsymbol E=(E,0,0)$, the
magnetic field is set to $\boldsymbol B=0$, $\varepsilon$ denotes the electric
permittivity, and
$J=\sum_a q_an_au_a$ is the longitudinal electric current density.  Ampere's
equation and Gauss's law are
\begin{equation}
 \varepsilon E_t=-J,\qquad
 J=\sum_aq_an_au_a,
 \qquad
 E_x=\mathcal Q(U):=\frac1\varepsilon\sum_aq_an_a.
 \label{eq:plasmaMaxwell}
\end{equation}
The involutive character of Gauss's law is explicit.  Indeed, the species
continuity equations imply charge conservation,
\begin{equation}
 \rho_{c,t}+J_x=0,
 \qquad \rho_c:=\sum_aq_an_a.
 \label{eq:chargeConservation}
\end{equation}
Hence, for the constraint defect
\begin{equation}
 \mathcal C:=E_x-\frac{\rho_c}{\varepsilon},
 \label{eq:GaussDefect}
\end{equation}
Ampere's equation and \eqref{eq:chargeConservation} give
\begin{equation}
 \mathcal C_t
 =E_{xt}-\frac{\rho_{c,t}}{\varepsilon}
 =-\frac{J_x}{\varepsilon}+\frac{J_x}{\varepsilon}=0.
 \label{eq:GaussPropagation}
\end{equation}
Thus Gauss's constraint is not merely compatible with a travelling reduction:
it is propagated exactly by every smooth solution of the full PDE.

Let $M_a$ denote the vector of the ten retained moments (density, the three
momentum components and the six independent components of the symmetric
second moment), and let $S_a^L$ be their collisional production generated by
the Landau operator.  The convex Gaussian kinetic entropy is
\begin{equation}
 \eta_a(M_a)
 =k_B\int_{\mathbb R^3} f_a^G
 \left(\log\frac{f_a^G}{y_a}-1\right)\,d\boldsymbol{\xi},
 \qquad \eta_{a,M_aM_a}(M_a)>0.
 \label{eq:gaussianentropy}
\end{equation}
Because $\log f_a^G$ is exactly quadratic in $\boldsymbol\xi$, contraction of
the retained Landau moments with the entropy variables
$\eta_{a,M_a}(M_a)$ reproduces the kinetic entropy production without any
truncation remainder,
\begin{equation}
 \sum_a\eta_{a,M_a}(M_a)\cdot S_a^L=-\sigma_L\le0,
 \label{eq:LandauEntropy}
\end{equation}
where $\sigma_L\ge0$ is the total Landau entropy dissipation.  The Landau
operator conserves each species mass, total momentum and total kinetic energy
\cite{GualdaniZamponi2017}.  In general, a kinetic $H$-theorem need not descend
to an entropy principle for an arbitrary finite moment truncation
\cite{BaeHaHwangRuggeri2024}; here it does precisely because the entropy
multiplier belongs to the retained quadratic moment space.  The electric force
term does not contribute to the kinetic entropy balance, since it is a velocity-space divergence.

Choose a neutral common Maxwellian equilibrium, with $T_0>0$ denoting the
common equilibrium temperature and the subscript $0$ denoting equilibrium
values,
\begin{equation}
 u_{a0}=0,\qquad
 P_{\parallel a,0}=P_{\perp a,0}=n_{a0}k_BT_0,\qquad
 E_0=0,\qquad \sum_aq_an_{a0}=0.
 \label{eq:plasmaeq}
\end{equation}
Subtracting the equilibrium linear invariants from the material kinetic
entropy and using the conserved total energy, including the electric-field
energy, gives the strictly convex relative free-energy entropy
\begin{equation}
 h=\sum_a k_B\int_{\mathbb R^3}
 \left[
 f_a^G\log\frac{f_a^G}{f_{a0}}
 -f_a^G+f_{a0}
 \right]d\boldsymbol\xi
 +\frac{\varepsilon E^2}{2T_0},
 \label{eq:plasmaRelativeEntropy}
\end{equation}
up to inessential linear combinations of the conserved species masses.
Denoting by $h^x$ the corresponding relative entropy flux, the exact entropy
balance reads
\begin{equation}
 \partial_th+\partial_xh^x=-\sigma_L\le0.
 \label{eq:plasmaentropy}
\end{equation}
No use of Gauss's constraint is made in deriving
\eqref{eq:plasmaentropy}.  Consequently the constraint multiplier appearing
in the constrained entropy identity can be chosen identically zero,
\begin{equation}
 V\equiv0.
 \label{eq:plasmaVzero}
\end{equation}
The collision invariants supply the conservative block required by Assumption~\ref{ass:RET}, and the
nonconserved moments form the balance block.  In the corresponding relative
entropy variables the nonequilibrium component of the equilibrium main field
vanishes, in agreement with the equilibrium characterization stated in
Section~\ref{sec:weakdiss}.

The source Jacobian in Gauss's law has full rank.  In the physical density
variables its nonzero part is
\begin{equation}
 \frac1\varepsilon(q_e,q_i),
 \qquad \operatorname{rank}\mathcal Q_{,U}(U_0)=1=M.
 \label{eq:plasmaRank}
\end{equation}
Here $M=1$ because Gauss's law is the single scalar involutive constraint.
The rank is unchanged by the locally invertible transformation to entropy
variables.  If $\lambda$ denotes a characteristic speed in the longitudinal
$x$-direction, the ten-moment block of each species has characteristic
polynomial
\[
 (\lambda-u_a)^2
 \left[(\lambda-u_a)^2-\frac{3P_{\parallel a}}{m_an_a}\right],
\]
while Ampere's equation adds a zero speed.  The factor $3$ is not the equilibrium specific-heat ratio: it arises from the homogeneous longitudinal ten-moment pressure equation, whose quasilinear part contains $3P_{\parallel a}\,\partial_x u_a$.  In the isotropic Euler limit the corresponding acoustic coefficient is instead $\gamma_a p_a/\rho_a$, with $\gamma_a=5/3$ for a monatomic species.  Hence at \eqref{eq:plasmaeq}
\begin{equation}
 \lambda_{\max}(U_0)=\max_{a=e,i}\sqrt{\frac{3k_BT_0}{m_a}}.
 \label{eq:plasmaLambda}
\end{equation}

\begin{theorem}[Explicit one-dimensional plasma speed bound]
\label{thm:plasma}
Any nontrivial $C^1$ travelling shock profile of the longitudinal two-species
Gaussian ten-moment system with Landau collisions, Ampere evolution and Gauss
constraint, approaching \eqref{eq:plasmaeq} ahead of the shock, must satisfy
\begin{equation}
 \boxed{
 s\le\max_{a=e,i}\sqrt{\frac{3k_BT_0}{m_a}}.
 }
 \label{eq:plasmaBound}
\end{equation}
For an ordinary electron--ion plasma, $m_e<m_i$, and the right-hand side is the
electron value.
\end{theorem}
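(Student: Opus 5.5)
The bound follows from Theorem~\ref{thm:nondegenerate} once its hypotheses are checked for the longitudinal plasma system and $\lambda_{\max}(U_0,n)$ is computed with $n=(1,0,0)$. We use the setting of Section~\ref{sec:plasma}. The constrained field is $U$, made of the main-field (entropy) variables of the twenty species moments together with $E$. The constraint is Gauss's law, with $M=1$, $R^x=E$ and $q=\mathcal Q(U)$. The entropy pair is the relative free-energy entropy \eqref{eq:plasmaRelativeEntropy} with the balance \eqref{eq:plasmaentropy}.

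The first step verifies the structural hypotheses. For Assumption~\ref{ass:RET} we recombine the equations with a constant matrix $T$, which is allowed by the invariance remark of Section~\ref{sec:weakdiss}. The recombined system places first the species mass equations, the total momentum, and the total energy including $\varepsilon E^2/2$, together with Ampere's equation. These have zero production, because the Landau operator conserves species mass, total momentum and total kinetic energy. The electric force terms combine with Ampere's equation into conservative form, and Gauss's law is used to write $q_an_aE$ summed over species as a divergence where needed. The remaining nonconserved moments form the balance block $g$.

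For Assumption~\ref{ass:equilibrium} we note that \eqref{eq:plasmaRelativeEntropy} is a relative entropy built on the Maxwellians $f_{a0}$. Its main field therefore vanishes in the nonequilibrium components at \eqref{eq:plasmaeq}. Moreover $f(U_0)=0$, since Landau collisions vanish on a common Maxwellian at rest. Finally $q(U_0)=\varepsilon^{-1}\sum_a q_an_{a0}=0$ by neutrality.

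The multiplier identities \eqref{eq:multipliers} hold with $V\equiv0$, by \eqref{eq:plasmaVzero}, because \eqref{eq:plasmaentropy} was derived without invoking Gauss's law. Convexity gives $H(U_0)>0$. The rank condition $\operatorname{rank}q_{,U}(U_0)=1$ is \eqref{eq:plasmaRank}; it is preserved under the local diffeomorphism to main-field variables because $q_e\neq0$. Since $V\equiv0$, $V_0=0$ holds directly, and the rank condition is then only a consistency check.

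The second step applies Theorem~\ref{thm:nondegenerate} to obtain $s\le\lambda_{\max}(U_0,n)$. It then evaluates $\lambda_{\max}(U_0,n)$ as the largest root of \eqref{eq:char}. With $V\equiv0$, $A_n$ is the Jacobian of the flux in main-field variables, so the roots of $\det(A_n-\lambda H)=0$ are the eigenvalues of the quasilinear system. The system is block triangular up to lower-order coupling: the electric field enters the moment equations only through source terms, and Ampere's equation contains no spatial derivatives. The spectrum is therefore the union of the species ten-moment spectra and the zero speed. Using the stated characteristic polynomial with $u_{a0}=0$ and $P_{\parallel a,0}/(m_an_{a0})=k_BT_0/m_a$ gives \eqref{eq:plasmaLambda} and hence \eqref{eq:plasmaBound}. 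Since $m_e<m_i$, the maximum is attained by electrons.

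The main obstacle is checking Assumption~\ref{ass:RET} honestly. The electric force contributes $q_an_aE$ to the momentum balances and $J E$ to the energy balance. Exposing a conservative block requires rewriting $\sum_a q_an_aE=\varepsilon EE_x$ as $\partial_x(\varepsilon E^2/2)$ using Gauss's law. This is exactly where the constraint enters the fluxes, so it must be done consistently with the multiplier formalism and with $V\equiv0$. My first attempt would keep the momentum and energy equations in balance form with the Lorentz terms as productions. I would then verify that the remaining source is still orthogonal to $U_0$, using $u_{a0}=0$ and $E_0=0$, since only \eqref{eq:orthf} is needed to reach \eqref{eq:Sigma-TW-identity} and hence \eqref{eq:reducedA}.
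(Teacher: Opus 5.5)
Your proposal is correct and follows the paper's own route. You verify the hypotheses of Theorem~\ref{thm:nondegenerate}, where $V\equiv0$ already gives $V_0=0$ so the rank condition is only a consistency check, and you read $\lambda_{\max}(U_0)$ off the block-diagonal spectrum formed by the species ten-moment blocks and the zero speed of Ampere's equation.

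Your first step is inaccurate as written, and it is your last-paragraph fallback that secures Assumption~\ref{ass:RET}:
\begin{itemize}
\item Ampere's equation carries the production $-J$.
\item Total momentum carries $\rho_cE$ unless Gauss's law is used.
\item Adding $\varepsilon E^2/2$ is not a constant recombination.
\end{itemize}
The fallback works because, for the relative free energy \eqref{eq:plasmaRelativeEntropy}, the entire main field vanishes at \eqref{eq:plasmaeq} apart from the species-mass directions, energy and $E$ components included, not only the nonequilibrium ones. Hence \eqref{eq:orthf} holds identically. Equivalently, Assumption~\ref{ass:RET} holds with only the two species mass equations as the conservative block. The paper's one-line appeal to collision invariants passes over this point.
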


\begin{proof}
Strict convexity and the entropy inequality follow from the relative Gaussian
free energy and \eqref{eq:plasmaentropy}.  Equation~\eqref{eq:GaussPropagation}
verifies full PDE propagation of the constraint, while
\eqref{eq:plasmaVzero} shows that its entropy multiplier is zero.  The
collision-invariant splitting is precisely Assumption~\ref{ass:RET}, and
\eqref{eq:plasmaRank} is the full-row-rank condition.  Apply
Theorem~\ref{thm:nondegenerate} and use \eqref{eq:plasmaLambda}.
\end{proof}

This physical example is therefore complementary to the rank-deficient
construction below: Gauss's law is a genuine propagated involutive constraint,
its source Jacobian has full row rank at equilibrium, the entropy principle
holds for the closed ten-moment--Landau system, and $V\equiv0$.

\section{Homogeneous constraints}

If $q\equiv0$, the travelling constraint gives
$R_n(U(\phi))=R_n(U_0)$ along the whole profile.  Expanding this identity at
$U_0$ and multiplying by $V_0$ cancels, to quadratic order, the two
$V_0$-dependent terms in the master expansion.  Hence
\begin{equation}
 \mathcal E(U,U_0)
 =\frac12X\T[A_n(U_0)-sH(U_0)]X+\Ocal(|X|^3)\ge0,
 \label{eq:reducedB}
\end{equation}
without any assumption on $V_0$.

\begin{theorem}[Homogeneous constraints]
\label{thm:homogeneous}
Let Assumptions~\ref{ass:RET} and \ref{ass:equilibrium} hold.  If $q\equiv0$ and $H(U_0)>0$, then a
nontrivial $C^1$ shock structure approaching $U_0$ can exist only if
\begin{equation}
 s\le\lambda_{\max}(U_0,n).
 \label{eq:boundB}
\end{equation}
\end{theorem}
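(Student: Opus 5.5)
The plan is to reduce Theorem~\ref{thm:homogeneous} to the same quadratic-form argument used for Theorem~\ref{thm:nondegenerate}, replacing the vanishing of $V_0$ by the exact conservation of $R_n$ along the profile.

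\emph{Step 1: $\rho$ vanishes identically.} Since $q\equiv0$, the travelling constraint \eqref{eq:TWconstraint} gives $\frac{\dd}{\dd\phi}R_n(U(\phi))=0$. Together with $U(\phi)\to U_0$ as $\phi\to+\infty$, this yields
\[
R_n(U(\phi))=R_0 \quad\text{for all } \phi .
\]
Hence the function $\rho$ defined in \eqref{eq:rho} vanishes identically; equivalently, \eqref{eq:rhoexact} holds trivially.

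\emph{Step 2: reduce the master expansion.} Substituting $\rho\equiv0$ into \eqref{eq:profilemaster} gives \eqref{eq:reducedB} along the profile, with no information needed on $V_0$. This is the one point requiring care. Formula \eqref{eq:master} on its own contains the first-order term $V_0\cdot R_{n,U}(U_0)X$, which has no sign and would dominate the quadratic term. That term disappears only because the full combination $\rho$, linear part plus the $R_{n,UU}$ curvature part, is exactly zero along an actual profile. One therefore cannot argue by Taylor expansion of \eqref{eq:master} at a general state; one must evaluate on the profile, where the constraint manifold $R_n=R_0$ is enforced.

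\emph{Step 3: sign contradiction.} Suppose $s>\lambda_{\max}(U_0,n)$. Because $H(U_0)>0$ and $A_n(U_0)$ is symmetric, the generalized eigenvalue problem \eqref{eq:char} shows that $A_n(U_0)-sH(U_0)$ is negative definite. So there is $c>0$ with
\[
\tfrac12 X\T[A_n(U_0)-sH(U_0)]X\le -c|X|^2 .
\]
The profile is nontrivial and tends to $U_0$, so $X(\phi)\neq0$ for arbitrarily large $\phi$ while $|X(\phi)|\to0$. Otherwise $U\equiv U_0$ on some half-line $[\phi_*,\infty)$, and uniqueness for the travelling ODE (or simply nontriviality near the tail) must be invoked to exclude this. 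At such points the cubic remainder from the $C^3$ regularity is $\Ocal(|X|^3)\le \tfrac c2|X|^2$. This gives $\mathcal E(U,U_0)<0$, contradicting \eqref{eq:E}, which was derived from the entropy inequality \eqref{eq:key} integrated to $+\infty$. Therefore $s\le\lambda_{\max}(U_0,n)$.

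The main obstacle is justifying that one may pick points with $X\neq0$ arbitrarily close to $U_0$. If a $C^1$ profile reached $U_0$ at finite $\phi$ and stayed there, the inequality at points of the nontrivial part would not be local. I would handle this as in the unconstrained Boillat--Ruggeri argument: take the supremum $\phi_*$ of the set where $X\neq0$. Then $X(\phi)\to0$ as $\phi\uparrow\phi_*$ with $X\neq0$ along a sequence, and since \eqref{eq:integrated} integrates $\Sigma$ only up to where the profile becomes constant, the same inequality $\mathcal E\ge0$ applies there and the contradiction persists.
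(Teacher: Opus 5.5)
Your proposal is correct and follows the paper's route. The paper also uses $R_n(U(\phi))=R_n(U_0)$, which is equivalent to $\rho\equiv0$, to cancel both $V_0$-dependent terms in \eqref{eq:master} and obtain \eqref{eq:reducedB}, and then repeats the negative-definiteness argument of Theorem~\ref{thm:nondegenerate}. Your supremum argument for a profile that reaches $U_0$ at finite $\phi$ is sound without any ODE uniqueness, and it spells out a detail the paper leaves implicit.
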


\begin{proof}
Use \eqref{eq:reducedB} and repeat the definiteness argument of
Theorem~\ref{thm:nondegenerate}.
\end{proof}

The homogeneous case includes propagated divergence-type constraints and is
closely related, in the conservative setting, to Dafermos's involutions
\cite{Dafermos1986}.  The genuinely dissipative nonhomogeneous case treated
below is different because the multiplier contribution need not cancel.

\section{Nonhomogeneous constraints with rank-deficient source Jacobian}
\label{sec:degenerate}

Assumptions~\ref{ass:RET} and \ref{ass:equilibrium} remain in force.  We now consider
\begin{equation}
 q\not\equiv0,
 \qquad
 \operatorname{rank}q_{,U}(U_0)<M,
 \qquad
 V_0\ne0.
 \label{eq:degenerate}
\end{equation}
Lemma~\ref{lem:weakstationarity} gives only
\begin{equation}
 V_0\cdot q_{,U}(U_0)=0,
 \label{eq:VDqdeg}
\end{equation}
and the equilibrium multiplier need not vanish.  The constraint contribution
to the relative entropy inequality must therefore be determined from the
actual asymptotic travelling dynamics.

\subsection{Integral form of the constraint contribution}

We now use the identities \eqref{eq:rho} and \eqref{eq:rhoexact}.
Since $q(U_0)=0$ and \eqref{eq:VDqdeg} holds,
\begin{equation}
 V_0\cdot q(U_0+X)
 =\frac12X\T QX+\Ocal(|X|^3),
 \qquad
 Q:=V_0\cdot q_{,UU}(U_0)=Q\T.
 \label{eq:Qdef}
\end{equation}
Hence
\begin{equation}
 \rho(\phi)
 =-\frac12\int_\phi^{+\infty}X(\psi)\T QX(\psi)\,\dd\psi
 +\Ocal\!\left(\int_\phi^{+\infty}|X(\psi)|^3\,\dd\psi\right).
 \label{eq:rhoQ}
\end{equation}
Thus the leading constraint contribution is quadratic and will be determined
below through a finite-dimensional Lyapunov equation.

\subsection{Reduced travelling dynamics and the conservation leaf}

Differentiating the potential relations \eqref{eq:potentialrelations} gives
\begin{equation}
 F_{n,U}=A_n-V_{,U}\T R_{n,U},
 \qquad
 F_{,U}=H.
 \label{eq:FnU}
\end{equation}
Combining the travelling balance law \eqref{eq:TWbalance} with the travelling
constraint \eqref{eq:TWconstraint} yields
\begin{equation}
 [A_n(U)-sH(U)]\dot U
 =f(U)+V_{,U}(U)\T q(U).
 \label{eq:reducedTW}
\end{equation}
Assume that the shock speed is noncharacteristic at the reference equilibrium,
so that
\begin{equation}
 B_s:=A_n(U_0)-sH(U_0)
 \label{eq:Bs}
\end{equation}
is invertible.  By continuity, $A_n(U)-sH(U)$ is then invertible in a
neighborhood of $U_0$, and every admissible profile satisfies there
\begin{equation}
 \dot U=\mathcal G_s(U)
 :=[A_n(U)-sH(U)]^{-1}
 [f(U)+V_{,U}(U)\T q(U)],
 \label{eq:Gs}
\end{equation}
Linearizing \eqref{eq:Gs} about the equilibrium state $U_0$, with
$X=U-U_0$, gives
\begin{equation}
 \dot X=L_sX+\Ocal(|X|^2),
 \qquad
 L_s:=B_s^{-1}
 \left[f_{,U}(U_0)+V_{,U}(U_0)\T q_{,U}(U_0)\right].
 \label{eq:Ls}
\end{equation}

The reduced dynamics \eqref{eq:Gs} must also satisfy the travelling constraint
\eqref{eq:TWconstraint}.  Substituting $\dot U=\mathcal G_s(U)$ into the latter
leads to the compatibility condition
\begin{equation}
 \Gamma_s(U):=R_{n,U}(U)\mathcal G_s(U)-q(U)=0.
 \label{eq:Gamma}
\end{equation}
Thus an admissible travelling profile is confined to the set
$\Gamma_s(U)=0$.  This is not the whole reduction, because the conservative
block in Assumption~\ref{ass:RET} can be integrated exactly.
Writing $F^{\rm c}$ and $F_n^{\rm c}$ for the densities and normal fluxes of
the conservation laws in Assumption~\ref{ass:RET}, define
\begin{equation}
 \mathcal J_s(U):=F_n^{\rm c}(U)-sF^{\rm c}(U).
 \label{eq:Js}
\end{equation}
Every travelling profile approaching $U_0$ satisfies
\begin{equation}
 \mathcal J_s(U)=\mathcal J_s(U_0).
 \label{eq:conservationleaf}
\end{equation}
This is the local Rankine--Hugoniot leaf associated with the conservative
variables.

We therefore introduce the combined travelling set
\begin{equation}
 \mathcal N_s:=\left\{U:\
 \mathcal J_s(U)=\mathcal J_s(U_0),\quad
 \Gamma_s(U)=0\right\}.
 \label{eq:Ns}
\end{equation}
Every smooth constrained travelling profile lies in $\mathcal N_s$.  The conservation leaf is essential: neutral directions caused by the
conserved quantities must be removed before the spectrum of the reduced
spatial linearization is analysed.

For the local theorem we impose only the following analytic regularity
conditions.  In a neighborhood of $U_0$, $\mathcal N_s$ is an embedded $C^2$
submanifold invariant under the reduced flow $\mathcal G_s$.  Moreover, $U_0$
is a hyperbolic equilibrium of the restricted flow
$\mathcal G_s|_{\mathcal N_s}$, namely, the linearization of this restricted
flow at $U_0$ has no eigenvalues with zero real part.  These are not additional
constitutive assumptions; they are the analytic regularity conditions needed
to apply the stable-manifold theorem to the reduced flow on $\mathcal N_s$.

Since $U_0$ is hyperbolic, the tangent space $T_{U_0}\mathcal N_s$ splits into
stable and unstable invariant subspaces.  We denote by
\begin{equation}
 E:=E^s\!\left(L_s|_{T_{U_0}\mathcal N_s}\right),
 \qquad d:=\dim E,
 \label{eq:Estable}
\end{equation}
the stable subspace, namely the span of the generalized eigendirections whose
eigenvalues have negative real part.  The local stable manifold through $U_0$
is tangent to $E$, and any travelling profile satisfying
$U(\phi)\to U_0$ as $\phi\to+\infty$ eventually lies on this manifold.

If $d=0$, there are no stable directions for the restricted linearized flow,
and therefore no nontrivial nearby admissible profile can approach $U_0$ as
$\phi\to+\infty$.  If $d\ge1$, choose a basis matrix
$S\in\mathbb R^{N\times d}$ whose range is the stable subspace $E$.  The
stable-manifold theorem then provides local coordinates $y\in\mathbb R^d$ on
the stable manifold, with $y=0$ corresponding to $U_0$, such that
\begin{align}
 X&=Sy+\Ocal(|y|^2),
 \label{eq:stablegraph}\\
 \dot y&=L_Ey+\Ocal(|y|^2),
 \qquad L_sS=SL_E,
 \qquad \max\Re\sigma(L_E)<0.
 \label{eq:stableflow}
\end{align}
Thus $L_E$ represents the linearized dynamics restricted to the stable
directions, and all its modes decay as $\phi\to+\infty$.

\begin{lemma}[Automatic quadratic-tail estimate]
\label{lem:ytail}
For every admissible profile described by \eqref{eq:stablegraph}--
\eqref{eq:stableflow}, there exists $C_E>0$ such that, for all sufficiently
large $\phi$,
\begin{equation}
 \int_\phi^{+\infty}|y(\psi)|^2\,\dd\psi
 \le C_E|y(\phi)|^2,
 \label{eq:ytail}
\end{equation}
and consequently
\begin{equation}
 \int_\phi^{+\infty}|y(\psi)|^3\,\dd\psi
 =o(|y(\phi)|^2).
 \label{eq:cubictail}
\end{equation}
\end{lemma}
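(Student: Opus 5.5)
The plan is to build a quadratic Lyapunov function for the stable linear part $L_E$ and exploit exponential decay along the stable manifold. Since $\max\Re\sigma(L_E)<0$, the Lyapunov equation
\[
 L_E\T P+PL_E=-I_d
\]
has a unique solution $P=P\T>0$. Set $W(y):=y\T Py$, so that $c_1|y|^2\le W(y)\le c_2|y|^2$ with $c_1,c_2>0$ the extreme eigenvalues of $P$. Along the reduced flow \eqref{eq:stableflow} we get
\[
 \dot W=-|y|^2+\Ocal(|y|^3).
\]
Because the profile converges to $U_0$, we have $y(\phi)\to0$. Hence for $\phi\ge\phi_0$ large enough, where $|y|$ is below a threshold $\delta$, the cubic remainder is bounded by $\tfrac12|y|^2$. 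This gives
\[
 \dot W\le-\tfrac12|y|^2\le-\tfrac{1}{2c_2}W
\]
on $[\phi_0,\infty)$.

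Integrating $\dot W\le-\frac12|y|^2$ from $\phi$ to $+\infty$, and using $W(y(\psi))\to0$ as $\psi\to\infty$, yields
\[
 \tfrac12\int_\phi^{+\infty}|y|^2\,\dd\psi\le W(y(\phi))\le c_2|y(\phi)|^2 .
\]
This is \eqref{eq:ytail} with $C_E=2c_2$. The differential inequality $\dot W\le-W/(2c_2)$ also shows that $W$, and therefore $|y|$, is nonincreasing up to constants on $[\phi,\infty)$. More precisely,
\[
 |y(\psi)|^2\le (c_2/c_1)\,|y(\phi)|^2 e^{-(\psi-\phi)/(2c_2)}\qquad\text{for }\psi\ge\phi\ge\phi_0 .
\]

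For \eqref{eq:cubictail}, we use this monotonicity to bound
\[
 \sup_{\psi\ge\phi}|y(\psi)|\le\sqrt{c_2/c_1}\,|y(\phi)| .
\]
Then
\[
 \int_\phi^\infty|y|^3\,\dd\psi\le\sqrt{c_2/c_1}\,|y(\phi)|\int_\phi^\infty|y|^2\,\dd\psi\le C_E\sqrt{c_2/c_1}\,|y(\phi)|^3=o(|y(\phi)|^2),
\]
since $y(\phi)\to0$. Here we should note that if $y(\phi)=0$ at some finite $\phi$, the profile is the equilibrium from then on and both statements are trivial.

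The main obstacle is only a bookkeeping point: we must justify that the $\Ocal(|y|^2)$ remainder in \eqref{eq:stableflow} is uniform near $y=0$. This follows from the $C^2$ regularity of $\mathcal N_s$ and of the reduced flow, which are assumed via the $C^3$ constitutive maps. We also need the profile to remain in the neighborhood where the stable-manifold chart is valid for all large $\phi$, and this is guaranteed by the convergence $U(\phi)\to U_0$.
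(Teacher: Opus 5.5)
Your proof is correct and follows the paper's argument. You use the same Lyapunov matrix solving $L_E^{\mathsf T}P+PL_E=-I$, the same differential inequality $\dot W\le-\frac12|y|^2$ near the origin, and the same integration to $+\infty$ using $W\to0$. The only difference is in the cubic estimate: the paper simply uses $\sup_{\psi\ge\phi}|y(\psi)|=o(1)$, whereas your exponential-decay bound $\sup_{\psi\ge\phi}|y(\psi)|\le\sqrt{c_2/c_1}\,|y(\phi)|$ yields the slightly sharper $\Ocal(|y(\phi)|^3)$.
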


\begin{proof}
Since $L_E$ is stable, choose $M>0$ solving
$L_E\T M+ML_E=-I$.  For
$\dot y=L_Ey+\Ocal(|y|^2)$, the quadratic function
$\mathcal V=y\T My$ satisfies $\dot{\mathcal V}\le-\frac12|y|^2$ near the
origin.  Integration to $+\infty$ gives \eqref{eq:ytail}; multiplying by
$\sup_{\psi\ge\phi}|y(\psi)|=o(1)$ gives \eqref{eq:cubictail}.
\end{proof}

\subsection{Quadratic correction on the compatible stable manifold}

Set
\begin{equation}
 A_E:=S\T A_n(U_0)S,
 \qquad
 H_E:=S\T H(U_0)S,
 \qquad
 Q_E:=S\T QS.
 \label{eq:restricted}
\end{equation}
Since $H(U_0)>0$, $H_E>0$.  Because $L_E$ is stable, the Lyapunov equation
\begin{equation}
 L_E\T P+PL_E=Q_E
 \label{eq:Lyapunov}
\end{equation}
has the unique symmetric solution
\begin{equation}
 P=-\int_0^{+\infty}e^{L_E\T t}Q_Ee^{L_Et}\,\dd t.
 \label{eq:Pformula}
\end{equation}

\begin{proposition}[Coordinate invariance]
\label{prop:basis}
The generalized spectrum of
\begin{equation}
 A_E+P-\lambda H_E
 \label{eq:stablepencil}
\end{equation}
is independent of the chosen basis matrix $S$ of $E$.
\end{proposition}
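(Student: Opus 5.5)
Any two basis matrices of $E$ are related by $\tilde S=SG$ with $G\in\R^{d\times d}$ invertible. The plan is to show that every object in the pencil \eqref{eq:stablepencil} transforms by the congruence $M\mapsto G\T MG$. Since
\[
\det(G\T(A_E+P-\lambda H_E)G)=(\det G)^2\det(A_E+P-\lambda H_E),
\]
the generalized eigenvalues then coincide.

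The first step is to collect the immediate transformations:
\[
\tilde A_E=G\T A_EG,\qquad \tilde H_E=G\T H_EG,\qquad \tilde Q_E=G\T Q_EG.
\]
These follow directly from \eqref{eq:restricted}.

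The second step concerns the restricted linear operator. From $L_S\tilde S=\tilde S\tilde L_E$ we get $L_sSG=SG\tilde L_E$, while also $L_sSG=SL_EG$. Because $S$ has full column rank, $L_EG=G\tilde L_E$, that is,
\[
\tilde L_E=G^{-1}L_EG .
\]
This requires knowing that $L_E$ in \eqref{eq:stableflow} is determined by $S$ through $L_sS=SL_E$. Full column rank of $S$, which holds because $S$ is a basis matrix, ensures exactly that. In particular $\tilde L_E$ is still stable, so the Lyapunov equation in the new basis has a unique solution.

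The third step, which is the only one needing care, is to show $\tilde P=G\T PG$. I would verify this by uniqueness rather than through the integral formula. Substituting,
\[
\tilde L_E\T(G\T PG)+(G\T PG)\tilde L_E=G\T L_E\T G^{-\mathsf T}G\T PG+G\T PGG^{-1}L_EG=G\T(L_E\T P+PL_E)G=G\T Q_EG=\tilde Q_E .
\]
Uniqueness for stable $\tilde L_E$ then gives the claim. As a cross-check, the integral \eqref{eq:Pformula} gives the same result via $e^{\tilde L_Et}=G^{-1}e^{L_Et}G$.

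Combining the three steps, $\tilde A_E+\tilde P-\lambda\tilde H_E=G\T(A_E+P-\lambda H_E)G$, and the determinant identity above finishes the proof. Since $H_E>0$, the pencil is regular, so its spectrum is well defined as the set of roots of this determinant. If the statement is also meant to cover the spectrum with multiplicities, the same congruence identity preserves those as well.
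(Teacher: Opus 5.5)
Your proof is correct and follows essentially the same route as the paper: under $\widetilde S=SG$ you obtain $\widetilde L_E=G^{-1}L_EG$, and $A_E$, $H_E$, $Q_E$ and the Lyapunov solution $P$ all transform by congruence, so the pencil is congruent and its spectrum is unchanged. You also make explicit two points the paper leaves implicit: that full column rank of $S$ determines $L_E$ uniquely, and that $\widetilde P=G^{\mathsf T}PG$ follows from uniqueness of the Lyapunov solution.
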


\begin{proof}
If $\widetilde S=ST$, $T\in GL(d,\mathbb R)$, then
$\widetilde L_E=T^{-1}L_ET$ and
$(\widetilde A_E,\widetilde H_E,\widetilde Q_E)=
(T\T A_ET,T\T H_ET,T\T Q_ET)$.  Hence
$\widetilde P=T\T PT$ and the new pencil is congruent to
\eqref{eq:stablepencil}.
\end{proof}

\begin{theorem}[Rank-deficient speed condition]
\label{thm:degenerate}
Let Assumptions~\ref{ass:RET} and \ref{ass:equilibrium} hold together with \eqref{eq:degenerate}, with
$H(U_0)>0$ and $B_s$ invertible.  Suppose that $\mathcal N_s$ in
\eqref{eq:Ns} is a regular invariant $C^2$ manifold near $U_0$, that $U_0$ is
hyperbolic for the restricted flow, and that the stable space $E$ in
\eqref{eq:Estable} has dimension $d\ge1$.  Let $P$ solve
\eqref{eq:Lyapunov}.  Then every nontrivial admissible profile approaching
$U_0$ satisfies
\begin{equation}
 \rho(\phi)=\frac12y(\phi)\T P y(\phi)+o(|y(\phi)|^2),
 \label{eq:rholyap}
\end{equation}
and the finite-amplitude entropy inequality reduces to
\begin{equation}
 \mathcal E
 =\frac12y\T[A_E+P-sH_E]y+o(|y|^2)\ge0.
 \label{eq:lyapcollapse}
\end{equation}
Thus, on the compatible stable manifold, the classical Boillat--Ruggeri
quadratic form $A_n-sH$ is replaced by the reduced quadratic form
$A_E+P-sH_E$.  Accordingly, the relevant local threshold is determined by the
corrected pencil restricted to the stable directions.  Denote by
$\Lambda_{\max}(s;E)$ the largest generalized eigenvalue of this pencil, that
is, the largest root $\lambda$ of
\begin{equation}
 \det(A_E+P-\lambda H_E)=0.
 \label{eq:LambdaEdef}
\end{equation}
Then a necessary condition for a nontrivial smooth profile is
\begin{equation}
 \boxed{s\le\Lambda_{\max}(s;E).}
 \label{eq:boundC}
\end{equation}
\end{theorem}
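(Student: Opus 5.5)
The plan is to start from the profile form \eqref{eq:profilemaster} of the finite-amplitude inequality. On the stable manifold, \eqref{eq:stablegraph} gives $X=Sy+\Ocal(|y|^2)$, so the classical quadratic term becomes
\[
\tfrac12 X\T B_sX=\tfrac12y\T S\T B_sSy+\Ocal(|y|^3)=\tfrac12y\T[A_E-sH_E]y+\Ocal(|y|^3),
\]
and the cubic remainder in \eqref{eq:profilemaster} is $o(|y|^2)$. Everything then reduces to proving the asymptotics \eqref{eq:rholyap} for $\rho$.

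For $\rho$ I will use \eqref{eq:rhoQ}. Substituting $X=Sy+\Ocal(|y|^2)$ gives
\[
\rho(\phi)=-\tfrac12\int_\phi^{+\infty}y\T Q_Ey\,\dd\psi+\Ocal\Bigl(\int_\phi^{+\infty}|y|^3\dd\psi\Bigr).
\]
By Lemma~\ref{lem:ytail} the remainder is $o(|y(\phi)|^2)$. To evaluate the principal integral, I use the Lyapunov equation \eqref{eq:Lyapunov} along the flow \eqref{eq:stableflow}:
\[
\frac{\dd}{\dd\psi}\bigl(y\T Py\bigr)=y\T(L_E\T P+PL_E)y+\Ocal(|y|^3)=y\T Q_Ey+\Ocal(|y|^3).
\]
Integrating from $\phi$ to $+\infty$ and using $y\to0$ yields
\[
-\int_\phi^{+\infty}y\T Q_Ey\,\dd\psi=y(\phi)\T Py(\phi)+\Ocal\Bigl(\int_\phi^\infty|y|^3\Bigr).
\]
Invoking Lemma~\ref{lem:ytail} again gives \eqref{eq:rholyap}. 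Adding this to the quadratic term gives \eqref{eq:lyapcollapse}.

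For the bound \eqref{eq:boundC}, I argue by contradiction. Suppose $s>\Lambda_{\max}(s;E)$. Since $H_E>0$, the symmetric matrix $A_E+P-sH_E$ is then negative definite, because $P$ is symmetric and the pencil eigenvalues are real. This gives $\frac12y\T[A_E+P-sH_E]y\le-c|y|^2$ for some $c>0$. A nontrivial profile has $y(\phi)\neq0$ for all large $\phi$, since $y\equiv0$ on an interval would force the profile to coincide with $U_0$ by uniqueness for the flow. Hence $\mathcal E<0$ for large $\phi$, which contradicts $\mathcal E\ge0$. Proposition~\ref{prop:basis} guarantees that $\Lambda_{\max}$ does not depend on the choice of $S$.

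The main obstacle is making the error terms uniform. Two points need care. First, the $\Ocal(|y|^2)$ corrections in both $X(y)$ and the $y$-flow must be handled as genuinely smooth functions on the $C^2$ stable manifold, which the $C^3$ constitutive regularity provides. Second, the step from $\int|y|^3$ to $o(|y(\phi)|^2)$ must hold along the specific profile. Lemma~\ref{lem:ytail} supplies this, so the delicate point is only checking that the profile enters the local stable manifold, which follows from hyperbolicity and the convergence $U\to U_0$.
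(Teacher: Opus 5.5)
Your proposal is correct and follows essentially the same route as the paper's proof. You expand $V_0\cdot q$ on the stable manifold as $\tfrac12 y^{\mathsf T}Q_E y$, use the Lyapunov identity for $\tfrac12 y^{\mathsf T}Py$, integrate to $+\infty$ with Lemma~\ref{lem:ytail} controlling the cubic remainders, substitute into \eqref{eq:profilemaster}, and conclude by negative definiteness. The steps you add explicitly — $S^{\mathsf T}B_sS=A_E-sH_E$, the symmetric-definite pencil argument, and nonvanishing of $y$ along a nontrivial tail — are ones the paper leaves implicit.
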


\begin{proof}
By \eqref{eq:Qdef}, \eqref{eq:stablegraph} and \eqref{eq:VDqdeg},
\[
 V_0\cdot q(U)=\frac12y\T Q_Ey+\Ocal(|y|^3).
\]
For $W(y)=\frac12y\T Py$, equations \eqref{eq:stableflow} and
\eqref{eq:Lyapunov} give
$\dot W=\frac12y\T Q_Ey+\Ocal(|y|^3)$.  Lemma~\ref{lem:ytail} permits
integration from $\phi$ to $+\infty$, yielding \eqref{eq:rholyap}.  Inserting
this in \eqref{eq:profilemaster} gives \eqref{eq:lyapcollapse}.  If
$s>\Lambda_{\max}(s;E)$, the leading quadratic form is negative definite on
$E$, contradicting $\mathcal E\ge0$ on a nontrivial tail.
\end{proof}

Consequently, if $s>\Lambda_{\max}(s;E)$, no nontrivial smooth admissible tail
can approach $U_0$.  As in the Boillat--Ruggeri argument, a travelling shock
structure connecting to that equilibrium, if it exists, must therefore lose
smoothness before reaching the equilibrium; this local condition alone neither
guarantees nor locates a sub-shock.  When $P=0$ the correction disappears; in
the propagated $2\times2$ case treated below, the resulting condition reduces
exactly to the classical Boillat--Ruggeri bound.

\begin{corollary}[Simple real stable mode]
\label{cor:simplemode}
Suppose the asymptotic profile is governed by
\begin{equation}
 X(\phi)=e^{-\kappa\phi}r+o(e^{-\kappa\phi}),
 \qquad \kappa>0,
 \label{eq:dominant}
\end{equation}
where $r\ne0$ is tangent to the conservation leaf and satisfies the linearized
balance and constraint relations
\begin{align}
 -\kappa[A_n(U_0)-sH(U_0)]r
 &= [f_{,U}(U_0)+V_{,U}(U_0)\T q_{,U}(U_0)]r,
 \label{eq:simplebalance}\\
 -\kappa R_{n,U}(U_0)r&=q_{,U}(U_0)r,
 \label{eq:simpleconstraint}\\
 \mathcal J_{s,U}(U_0)r&=0.
 \label{eq:simpleconservation}
\end{align}
Then
\begin{equation}
 s\le
 \frac{r\T\widehat A_n(U_0;\kappa)r}{r\T H(U_0)r},
 \qquad
 \widehat A_n(U_0;\kappa)
 :=A_n(U_0)-\frac{1}{2\kappa}V_0\cdot q_{,UU}(U_0).
 \label{eq:directionalbound}
\end{equation}
\end{corollary}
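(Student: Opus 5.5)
This corollary follows from Theorem~\ref{thm:degenerate} specialised to a one-dimensional stable space, but I will check it directly from \eqref{eq:profilemaster} and \eqref{eq:rhoexact}; that route avoids the Lyapunov machinery. First I substitute the asymptotic form \eqref{eq:dominant} into the exact identity \eqref{eq:rhoexact}. By \eqref{eq:Qdef} we have $V_0\cdot q(U_0+X)=\tfrac12X\T QX+\Ocal(|X|^3)$, so along the tail
\[
V_0\cdot q(U(\psi))=\tfrac12 e^{-2\kappa\psi}\,r\T Qr+o(e^{-2\kappa\psi}).
\]
Integrating from $\phi$ to $+\infty$ gives
\[
\rho(\phi)=-\frac{1}{4\kappa}e^{-2\kappa\phi}\,r\T Qr+o(e^{-2\kappa\phi}).
\]
The integral converges because $\kappa>0$. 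The $o$-term is integrable with the same order, since $o(e^{-2\kappa\psi})$ integrated over $[\phi,\infty)$ is $o(e^{-2\kappa\phi})$.

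This agrees with the Lyapunov description. If $E$ is spanned by $r$, then $L_E=-\kappa$, and \eqref{eq:Lyapunov} gives $P=-Q_E/(2\kappa)$. Inserting this into \eqref{eq:rholyap} reproduces the same coefficient, and I will note this as a consistency check. Relations \eqref{eq:simplebalance}--\eqref{eq:simpleconservation} are used only to guarantee that $r$ is a genuine decaying eigendirection of $L_s$ tangent to $\mathcal N_s$. The first is the eigen-equation $L_sr=-\kappa r$. The second and third are the linearisations of $\Gamma_s=0$ and of \eqref{eq:conservationleaf}. These relations are what make the ansatz consistent; they do not enter the inequality itself.

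Next I insert $\rho$ and $X$ into \eqref{eq:profilemaster}:
\[
\mathcal E=e^{-2\kappa\phi}\Bigl[\tfrac12 r\T(A_n(U_0)-sH(U_0))r-\tfrac1{4\kappa}r\T Qr\Bigr]+o(e^{-2\kappa\phi}).
\]
The cubic remainder is $\Ocal(e^{-3\kappa\phi})$, which is absorbed into the $o$-term. Because $\mathcal E\ge0$ for all large $\phi$, I divide by $e^{-2\kappa\phi}$ and let $\phi\to+\infty$. The bracket must then be nonnegative, so
\[
r\T\bigl(A_n-\tfrac{1}{2\kappa}Q\bigr)r\ge s\,r\T Hr .
\]
Since $H(U_0)>0$ and $r\ne0$, dividing by $r\T Hr$ yields \eqref{eq:directionalbound} with $\widehat A_n$ as defined, using $Q=V_0\cdot q_{,UU}(U_0)$.

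The main obstacle is controlling the error terms. I need the $o(e^{-\kappa\phi})$ correction in $X$ to contribute only $o(e^{-2\kappa\phi})$ both to the quadratic forms and to the integrated $\rho$. Cross terms of the form $e^{-\kappa\phi}r\cdot o(e^{-\kappa\phi})$ are $o(e^{-2\kappa\phi})$, and this order is preserved under integration over $[\phi,\infty)$ because the integrand is dominated by $\varepsilon e^{-2\kappa\psi}$ for large $\psi$. The remaining cubic terms are handled as in Lemma~\ref{lem:ytail}.
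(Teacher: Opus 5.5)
Your proposal is correct and follows essentially the same route as the paper: substitute the exponential tail \eqref{eq:dominant} into \eqref{eq:rhoexact} to obtain $\rho=-(4\kappa)^{-1}e^{-2\kappa\phi}\,r\T Qr+o(e^{-2\kappa\phi})$, insert this into \eqref{eq:profilemaster}, and use $\mathcal E\ge0$ as $\phi\to+\infty$. Your explicit control of the $o$-terms and the Lyapunov consistency check $P=-Q_E/(2\kappa)$ are details the paper leaves implicit, and you are right that \eqref{eq:simplebalance}--\eqref{eq:simpleconservation} do not enter the inequality itself.
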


\begin{proof}
Substitution of \eqref{eq:dominant} into \eqref{eq:rhoexact} gives
$\rho=-(4\kappa)^{-1}V_0\cdot q_{,UU}(U_0)[X,X]+o(|X|^2)$.  The profile
expansion \eqref{eq:profilemaster} and $\mathcal E\ge0$ then give
\eqref{eq:directionalbound}.
\end{proof}

\begin{remark}[Local nature of the nonexistence condition]
\label{rem:localcondition}
The Taylor expansion and stable-manifold argument are used only near the
unperturbed equilibrium $U_0$.  Any global heteroclinic profile approaching
$U_0$ eventually enters that neighborhood.  Thus the derivation is local but
the nonexistence conclusion excludes a global nontrivial $C^1$ profile with
the prescribed state ahead of the shock.  Conversely, the inequality is only
a necessary condition; it is not a global existence theorem and does not by
itself locate a possible sub-shock.
\end{remark}

\section{Propagated rank-deficient constraints in two-field balance laws}
\label{sec:2x2propagated}

The rank-deficient result of Section~\ref{sec:degenerate} is a theorem about
travelling profiles satisfying the differential constraint.  It does not by
itself require that the constraint be propagated by arbitrary solutions of the
full PDE.  We now impose this stronger property and show that, in the minimal
$2\times2$ case with one conservative and one dissipative equation, it has a decisive consequence.

We work in one space dimension and write the original balance laws, before the
constraint is used in the symmetric reduction, in quasilinear form
\begin{align}
 \begin{split}
    & U_t+C(U)U_x=S(U),\\
& C(U):=F_{,U}(U)^{-1}F^1_{,U}(U),
 \quad
 S(U):=F_{,U}(U)^{-1}f(U).
 \label{eq:orig-quasilinear}
 \end{split}
\end{align}
Since $F_{,U}=H>0$, this representation is locally equivalent to the balance
laws.  Consider one scalar differential constraint
\begin{equation}
 R_{,U}(U)U_x=q(U),
 \qquad
 \mathcal C:=R_{,U}(U)U_x-q(U).
 \label{eq:scalar-defect}
\end{equation}
We say that it is \emph{strongly propagated} near $U_0$ if there exist smooth
functions $a(U)$ and $b(U,U_x)$ such that
\begin{equation}
 \mathcal C_t+a(U)\mathcal C_x=b(U,U_x)\mathcal C
 \label{eq:strongprop}
\end{equation}
for every smooth solution of the balance laws.  The precise scalar form of
\eqref{eq:strongprop} is not essential; it is a convenient local expression of
involutivity and is satisfied by the explicit example of the next section.

Let
\begin{equation}
 r:=R_{,U}(U_0),
 \qquad
 B:=S_{,U}(U_0)=H(U_0)^{-1}f_{,U}(U_0),
 \qquad
 C_0:=C(U_0),
 \label{eq:rBC}
\end{equation}
and, in addition, assume
\begin{equation}
 q_{,U}(U_0)=0,
 \qquad
 r\ne0.
 \label{eq:2x2rankdef}
\end{equation}
Thus the scalar constraint is rank deficient at equilibrium.  Put
$K:=\ker r$, which is one-dimensional.

\begin{lemma}[Linear consequences of propagation]
\label{lem:prop-linear}
Under \eqref{eq:strongprop}--\eqref{eq:2x2rankdef}, the line $K$ is invariant
under both $C_0$ and $B$.  If $e$ spans $K$, then
\begin{equation}
 C_0e=\lambda e,
 \qquad
 Be=\beta e,
 \qquad
 rB=\mu r,
 \qquad
 \mu:=b(U_0,0).
 \label{eq:lambdabetamu}
\end{equation}
\end{lemma}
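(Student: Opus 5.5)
The plan is to compute $\mathcal C_t$ explicitly for an arbitrary smooth solution and compare it with the right-hand side of \eqref{eq:strongprop}, working at lowest order about the equilibrium. Since \eqref{eq:strongprop} must hold for every smooth solution, and at a fixed point and time the jet $(U,U_x,U_{xx})$ can be prescribed freely (the balance laws only fix $U_t$), the resulting identity holds as a polynomial identity in $(U_x,U_{xx})$ for each $U$ near $U_0$. We evaluate at $U=U_0$.

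Using $U_t=S-CU_x$, we differentiate:
\[
\mathcal C_t=R_{,UU}[U_t,U_x]+R_{,U}(S-CU_x)_x-q_{,U}U_t .
\]
At $U=U_0$ we have $S(U_0)=0$ (since $f(U_0)=0$) and $q_{,U}(U_0)=0$. Hence
\[
\mathcal C_t=-r C_0U_{xx}+r B U_x+\Ocal(|U_x|^2).
\]
On the other side, $\mathcal C=rU_x+\Ocal(|U_x|^2)$ because $q(U_0)=0$, and $\mathcal C_x=rU_{xx}+\Ocal(|U_x|^2)$. Therefore
\[
a\mathcal C_x-b\mathcal C=a_0\, rU_{xx}-\mu\, rU_x+\Ocal(|U_x|^2),
\]
with $a_0:=a(U_0)$ and $\mu=b(U_0,0)$. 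Here we use smoothness of $b$, so that $b(U_0,U_x)=\mu+\Ocal(|U_x|)$. The only care needed is to check that no $U_{xx}$ term is hidden in the quadratic remainders; they involve only products of first derivatives, so this is immediate.

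Now we match coefficients. Comparing the coefficients of $U_{xx}$, which is arbitrary, gives $rC_0=a_0 r$. Comparing the terms linear in $U_x$, which is also arbitrary, gives $rB=\mu r$. The first relation says $r$ is a left eigenvector of $C_0$, so $K=\ker r$ is $C_0$-invariant: if $re=0$, then $rC_0e=a_0re=0$. Since $K$ is one-dimensional, $C_0e=\lambda e$ for some $\lambda$, and in fact $\lambda$ is the other eigenvalue of $C_0$. Likewise, $rBe=\mu re=0$ gives $Be\in K$, so $Be=\beta e$.

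The main obstacle is justifying the free prescription of the jet and the splitting of the identity into orders. Alternatively, one can argue by scaling. Take initial data $U=U_0+\epsilon\,\Phi(x)$ with $\Phi$ arbitrary, differentiate \eqref{eq:strongprop} in $\epsilon$ at $\epsilon=0$ at $t=0$, and use the independence of $\Phi'$ and $\Phi''$ at a point. This yields the two linear relations without any polynomial-identity argument.
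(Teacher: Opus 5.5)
Your proposal is correct and is essentially the paper's argument: the paper linearizes \eqref{eq:strongprop} at the constant equilibrium, notes that the linearized defect is $rX_x$ because $q_{,U}(U_0)=0$, and compares the coefficients of $X_{xx}$ and $X_x$ to obtain $rC_0=a(U_0)r$ and $rB=\mu r$. Your $\epsilon$-scaling variant is exactly that linearization, and your pointwise computation at $U=U_0$, with $(U_x,U_{xx})$ prescribed through local solvability of the Cauchy problem, makes explicit a step the paper leaves implicit.
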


\begin{proof}
Linearize \eqref{eq:strongprop} at the constant equilibrium.  Since
$q_{,U}(U_0)=0$, the linearized defect is $rX_x$.  Comparing separately the
coefficients of $X_{xx}$ and $X_x$ gives
\[
 rC_0=a(U_0)r,
 \qquad
 rB=\mu r.
\]
Hence $C_0K\subset K$ and $BK\subset K$.  Since $K$ is one-dimensional,
\eqref{eq:lambdabetamu} follows.
\end{proof}

The second-order consequence of propagation is equally simple.

\begin{lemma}[Quadratic propagation identity]
\label{lem:prop-quadratic}
Let
\begin{equation}
 Q:=q_{,UU}(U_0).
 \label{eq:Qscalar}
\end{equation}
Then
\begin{equation}
 (2\beta-\mu)Q[e,e]=0.
 \label{eq:resonance2x2}
\end{equation}
\end{lemma}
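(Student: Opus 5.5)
We prove the identity by expanding the strong propagation identity \eqref{eq:strongprop} to second order in the amplitude. We use a test solution that is spatially homogeneous at leading order along the direction $e$; this isolates the quadratic part of the constraint source. Concretely, we consider smooth solutions of \eqref{eq:orig-quasilinear} with $X=U-U_0$ small. We write $\mathcal C=rX_x+(R_{,U}(U)-r)X_x-q(U)$ and $q(U_0+X)=\frac12Q[X,X]+\Ocal(|X|^3)$, since $q(U_0)=0$ and $q_{,U}(U_0)=0$. The key observation is that the term $-\frac12Q[X,X]$ is the only quadratic contribution to $\mathcal C$ that contains no $x$-derivative. We therefore compare the underived quadratic terms on both sides of \eqref{eq:strongprop}.

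Step 1. We compute $\mathcal C_t$ modulo terms carrying at least one $x$-derivative of $X$. These derivative-carrying terms are collected separately and matched at the orders treated in Lemma~\ref{lem:prop-linear}. From $X_t=-C(U)X_x+S(U)$, every term $R_{,U}X_{xt}$ involves $x$-derivatives. The remaining term is $-\partial_t q(U)=-q_{,U}(U)X_t$. Its underived part is $-q_{,U}(U)S(U)=-(QX)\cdot(BX)+\Ocal(|X|^3)$, i.e.\ $-Q[X,BX]$.

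Step 2. Next we treat the right-hand side. Since $\mathcal C$ is evaluated with $b(U,U_x)$, the underived quadratic part of $b\,\mathcal C$ is $\mu\cdot(-\frac12Q[X,X])$. Here the linear part of $b$ in $X$ multiplies only $\mathcal C$, whose underived part is already quadratic, so it contributes at cubic order only. The term $a\,\mathcal C_x$ carries a derivative. Matching the underived quadratic terms, which is legitimate because \eqref{eq:strongprop} holds for every smooth solution and we may prescribe arbitrary initial data $X(\cdot,0)$, including data that are locally constant to arbitrary order, gives the identity
\[
Q[X,BX]=\tfrac12\mu\,Q[X,X]\qquad\text{for all }X\in\R^2.
\]

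Step 3. We specialise to $X=e$ and use $Be=\beta e$ from Lemma~\ref{lem:prop-linear}. This gives $\beta Q[e,e]=\frac12\mu Q[e,e]$, i.e.\ $(2\beta-\mu)Q[e,e]=0$, which is \eqref{eq:resonance2x2}.

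The main obstacle is justifying the separation of the underived quadratic terms from those with derivatives. This step relies on the requirement that \eqref{eq:strongprop} hold for all smooth solutions, not merely for those satisfying $\mathcal C=0$. Because both sides are pointwise functions of $(U,U_x,U_{xx})$ once $U_t$ is eliminated via \eqref{eq:orig-quasilinear}, we may regard \eqref{eq:strongprop} as a polynomial identity in the independent jet variables $U_x,U_{xx}$ at each $U$. Setting $U_x=U_{xx}=0$ then gives $-q_{,U}(U)S(U)=-b(U,0)q(U)$ exactly near $U_0$. Its second-order Taylor expansion at $U_0$ yields the displayed identity directly, with no need for an asymptotic argument.
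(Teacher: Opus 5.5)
Your proposal is correct and follows essentially the paper's route: setting $U_x=U_{xx}=0$ is the same as the paper's evaluation of \eqref{eq:strongprop} on spatially homogeneous solutions. This gives the exact identity $q_{,U}(U)S(U)=b(U,0)q(U)$, whose quadratic Taylor terms yield $\beta Q[e,e]=\tfrac12\mu Q[e,e]$; you write these terms for general $X$, as an identity of quadratic forms, before specializing to $e$. The one ingredient you use tacitly is $S(U_0)=H(U_0)^{-1}f(U_0)=0$ from Assumption~\ref{ass:equilibrium}, which is what gives $q_{,U}(U)S(U)=Q[X,BX]+\Ocal(|X|^3)$ with no lower-order term; the detour in Steps 1--2 through ``underived quadratic terms'' is unnecessary once your final paragraph is in place.
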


\begin{proof}
Evaluate \eqref{eq:strongprop} on spatially homogeneous solutions.  Then
$\mathcal C=-q(U)$ and one obtains
\begin{equation}
 q_{,U}(U)S(U)=b(U,0)q(U).
 \label{eq:qSidentity}
\end{equation}
For $U=U_0+\varepsilon e$,
\[
 q(U)=\frac{\varepsilon^2}{2}Q[e,e]+\Ocal(\varepsilon^3),
 \qquad
 q_{,U}(U)S(U)=\varepsilon^2Q[e,Be]+\Ocal(\varepsilon^3).
\]
Using $Be=\beta e$ and comparing the quadratic terms in
\eqref{eq:qSidentity} gives
$\beta Q[e,e]=\mu Q[e,e]/2$, which is \eqref{eq:resonance2x2}.
\end{proof}

We now use the conservative--dissipative block structure.  In the two-field case with one conservation law
and one genuine balance law, a constant nonsingular recombination of the field
equations sends the production to $(0,g)^{\mathsf T}$.  Therefore
\begin{equation}
 \operatorname{rank}f_{,U}(U_0)\le1,
 \qquad
 \operatorname{rank}B\le1.
 \label{eq:rankB}
\end{equation}
This statement is invariant under the constant recombination and is therefore
not tied to a particular representation of the two equations.  We now use also
the relation $e\in\ker r$.  If $\operatorname{rank}B=0$, then $B=0$ and
there is nothing to prove.  If $\operatorname{rank}B=1$, write
$B=a\otimes\ell$ with $a\ne0$ and $\ell\ne0$.  Suppose, for contradiction,
that $\beta\ne0$ and $\mu\ne0$.  From $Be=\beta e$ it follows that $e$ is
parallel to $a$, whereas from
\[
 rB=(r\cdot a)\ell=\mu r
\]
it follows that $r$ is parallel to $\ell$.  Since $re=0$, one then has
$\ell(e)=0$, and therefore $Be=a\,\ell(e)=0$, contradicting
$\beta\ne0$.  Hence
\begin{equation}
 \beta\mu=0.
 \label{eq:betamu0}
\end{equation}

\begin{theorem}[Two-field rigidity for propagated rank-deficient constraints]
\label{thm:2x2rigidity}
Consider a $2\times2$ system satisfying Assumption~\ref{ass:RET}, with one
conservation law and one genuine balance law, together with one strongly propagated scalar differential
constraint.  Assume \eqref{eq:2x2rankdef}.  Let a nontrivial travelling profile
of speed $s$ approach $U_0$ through a regular noncharacteristic one-dimensional
stable tail tangent to $K=\ker R_{,U}(U_0)$, with nonzero spatial decay rate.
Then
\begin{equation}
 Q[e,e]=0.
 \label{eq:Qe0}
\end{equation}
Consequently the quadratic correction $P$ determined by \eqref{eq:Lyapunov}
vanishes on the stable space,
\begin{equation}
 Q_E=0,
 \qquad
 P=0,
 \label{eq:P0}
\end{equation}
and the local necessary condition reduces to the classical form
\begin{equation}
 \boxed{s\le\lambda_{\max}(U_0).}
 \label{eq:2x2classicalbound}
\end{equation}
\end{theorem}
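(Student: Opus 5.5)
Once we know that $Q[e,e]=0$, the rest is short. The stable space is $E=\operatorname{span}\{e\}$, so with $S=e$ we get $Q_E=e\T Qe\cdot V_0=V_0Q[e,e]=0$; recall that $M=1$ and that the multiplier enters $Q$ through $V_0\cdot q_{,UU}$. The Lyapunov equation \eqref{eq:Lyapunov} has a unique solution, which must be $P=0$. Theorem~\ref{thm:degenerate} then gives $\mathcal E=\tfrac12 y^2\,e\T[A_n(U_0)-sH(U_0)]e+o(y^2)\ge0$. If $s>\lambda_{\max}(U_0)$, the matrix $A_n(U_0)-sH(U_0)$ is negative definite, and this contradicts the inequality exactly as in Theorem~\ref{thm:nondegenerate}. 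That yields \eqref{eq:2x2classicalbound}. The real work is therefore proving \eqref{eq:Qe0}.

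By Lemma~\ref{lem:prop-quadratic} we have $(2\beta-\mu)Q[e,e]=0$, and \eqref{eq:betamu0} gives $\beta\mu=0$. It suffices to show that $2\beta\neq\mu$, and the only way this can fail is $\beta=\mu=0$. So the task is to rule out $\beta=\mu=0$ using the hypothesis that the tail has a nonzero spatial decay rate. I plan to linearize the travelling system along the tail direction $e$. The quasilinear form \eqref{eq:orig-quasilinear} gives $(C(U)-s)\dot U=S(U)$ for travelling waves, and at linear order this becomes $(C_0-sI)\dot X=BX$. Since the tail is tangent to $e$, we write $X\sim e^{-\kappa\phi}e$ with $\kappa\neq0$. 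Then Lemma~\ref{lem:prop-linear} gives $-\kappa(\lambda-s)e=\beta e$, so $\beta=-\kappa(\lambda-s)$. The noncharacteristic hypothesis should give $s\ne\lambda$, because $\lambda$ is an eigenvalue of $C_0$ and hence a characteristic speed (the pencil $A_n-\lambda H$ is related to $C_0$ by congruence-type transformations through $F_{,U}=H$). Since $\kappa\neq0$, it follows that $\beta\ne0$. Then \eqref{eq:betamu0} forces $\mu=0$, so $2\beta-\mu=2\beta\ne0$, and Lemma~\ref{lem:prop-quadratic} yields $Q[e,e]=0$.

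The main obstacle is justifying that the tail rate is exactly the eigenvalue relation $\beta=-\kappa(\lambda-s)$. Two points need care. First, the reduced flow \eqref{eq:Gs} is written in main-field variables with the constraint already substituted, so it involves $V_{,U}\T q$. I need to check that this extra term does not change the linearization. It does not, because $q_{,U}(U_0)=0$ makes $V_{,U}\T q_{,U}$ vanish at $U_0$. As a result the linearization is $B_s^{-1}f_{,U}(U_0)$, which coincides with $(C_0-s)^{-1}B$ via $C=H^{-1}A$-type relations. Second, I need to confirm that the one-dimensional stable direction is indeed $K$. This is compatible with the constraint relation \eqref{eq:simpleconstraint}, since $q_{,U}(U_0)=0$ forces $\kappa\,r\cdot e=0$ and hence $e\in\ker r$ when $\kappa\ne0$. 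The statement also assumes this tangency directly. I will also check that the degenerate alternative $\kappa=0$ is precisely the case excluded by hypothesis; the paper treats that case separately.
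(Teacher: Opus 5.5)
Your proposal is correct and follows the paper's proof essentially step for step. The mode relation from $(C_0-sI)\dot X=BX$ along $e$ gives $\beta\neq0$ under the noncharacteristic, nonzero-rate hypothesis; \eqref{eq:betamu0} then forces $\mu=0$, and Lemma~\ref{lem:prop-quadratic} gives $Q[e,e]=0$. After that, $Q_E=V_0Q[e,e]=0$ and $P=0$ reduce Theorem~\ref{thm:degenerate} to the classical definiteness argument.

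One small correction: since $H(U_0)(C_0-sI)=A_n(U_0)-sH(U_0)-V_{,U}(U_0)\T r$, the two linearizations do not coincide as matrices, and neither do the spectrum of $C_0$ and that of the pencil $A_n-\lambda H$. They agree only on $K=\ker r$. That is all your argument uses, and it is exactly the observation with which the paper closes its proof.
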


\begin{proof}
Because the linearized constraint gives $r\dot X=0$, the regular tail is tangent
to $K$.  Let $e$ span this line.  The linearized travelling equation in the
original balance variables is
\[
 (C_0-sI)\dot X=BX.
\]
By Lemma~\ref{lem:prop-linear}, $C_0e=\lambda e$ and $Be=\beta e$; hence a mode
$X(\phi)=e^{\kappa\phi}e$ satisfies
\begin{equation}
 \kappa(\lambda-s)=\beta.
 \label{eq:kappabeta}
\end{equation}
The tail is noncharacteristic and $\kappa\ne0$, so $\beta\ne0$.  Equation
\eqref{eq:betamu0} therefore gives $\mu=0$.  The quadratic propagation identity
\eqref{eq:resonance2x2} now yields $Q[e,e]=0$.

For the rank-deficient entropy correction of Section~\ref{sec:degenerate}, the
stable space is one-dimensional and generated by the same compatible direction
$e$.  Hence $Q_E=V_0Q[e,e]=0$.  The Lyapunov equation has the unique stable
solution $P=0$.  Finally, on $K$ the original and constrained symmetric flux
Jacobians have the same action, because their difference is proportional to
$R_{,U}(U_0)$ and therefore annihilates $e$.  The quadratic entropy condition
is consequently the classical one on the compatible direction, and in
particular implies \eqref{eq:2x2classicalbound}.
\end{proof}

\begin{remark}[What the theorem does and does not say]
Theorem~\ref{thm:2x2rigidity} does not invalidate the general rank-deficient reduction
of Theorem~\ref{thm:degenerate}.  That theorem concerns travelling-wave
compatibility and is valid in any dimension under its stated regularity
hypotheses.  The new result says that, in the minimal two-field conservative--dissipative case, the
additional physical requirement of full PDE propagation eliminates a nonzero
quadratic correction whenever the compatible tail has a nonzero linear decay
rate.  A nonzero correction may still be possible in higher dimension, where
$B$ can have a larger dissipative block and the scalar identity
\eqref{eq:betamu0} no longer forces the same conclusion.  This remains open.
\end{remark}

\begin{remark}[Degenerate tails]
If $\beta=0$, equation \eqref{eq:kappabeta} no longer produces a nonzero
exponential spatial rate.  The equilibrium of the travelling ODE is then
degenerate in the relevant direction, although the PDE itself remains
symmetric hyperbolic.  Such algebraic or center-type approaches lie outside the
regular stable-tail analysis developed above and must be analysed directly.  The next example is
of exactly this kind.
\end{remark}

\section{A propagated two-field model: speed selection and algebraic tail}
\label{sec:propagatedexample}

We now give an exact model satisfying the structural requirements used in
Theorem~\ref{thm:2x2rigidity}, but lying in its degenerate-tail alternative.
The construction is inverse in the spirit of Mentrelli and Ruggeri
\cite{MentrelliRuggeri2006}.  We choose the main field $U=(u,v)\T$ and the
potentials
\begin{equation}
 h'=\frac12(u^2+v^2),
 \qquad
 h^{\prime1}=\frac1{12}u^3+\frac12u^2+v^2.
 \label{eq:ss-potentials}
\end{equation}
Let
\begin{equation}
 R(u,v)=v,
 \qquad
 V(u,v)=-\frac54u-\frac14,
 \qquad
 q(v)=v^2(1+v)^2,
 \label{eq:ss-RVq}
\end{equation}
so that
\begin{equation}
 F(U)=\binom uv,
 \qquad
 F^1(U)=\binom{\frac14u^2+u+\frac54v}{2v}.
 \label{eq:ss-flux}
\end{equation}
Using the differential constraint in the symmetric reduction gives
\begin{equation}
 H=I,
 \qquad
 A=\begin{pmatrix}1+\frac12u&0\\0&2\end{pmatrix},
 \qquad
 \lambda_1(U)=1+\frac12u,
 \qquad
 \lambda_2(U)=2.
 \label{eq:ss-charpoly}
\end{equation}

We impose the production
\begin{equation}
 f(U)=q(v)\binom11.
 \label{eq:ss-source}
\end{equation}
The difference of the two field equations is then an exact conservation law,
\begin{equation}
 (u-v)_t+\left(\frac14u^2+u-\frac34v\right)_x=0.
 \label{eq:ss-conservation}
\end{equation}
Equivalently, with
\begin{equation}
 T=\begin{pmatrix}1&-1\\0&1\end{pmatrix},
 \label{eq:ss-T}
\end{equation}
one has
\begin{equation}
 Tf(U)=\binom0{q(v)},
 \qquad
 \widetilde U=T^{-\mathsf T}U=\binom{u}{u+v}.
 \label{eq:ss-RETtransform}
\end{equation}
Thus the nonequilibrium component of the transformed main field is
$\widetilde w=u+v$.  The two equilibrium states used below satisfy
\begin{equation}
 U_1=(1,-1),\qquad U_0=(0,0),
 \qquad
 \widetilde U_1=(1,0),\qquad \widetilde U_0=(0,0).
 \label{eq:ss-equilibrium-normalization}
\end{equation}
In particular, the equilibrium normalization of
Assumption~\ref{ass:equilibrium} is satisfied not only at the reference state
$U_0$ but also at the equilibrium state $U_1$ on the other side of the shock.

The complete constrained system is
\begin{align}
 u_t+\left(\frac14u^2+u+\frac54v\right)_x&=q(v),
 \label{eq:ss-system-u}\\
 v_t+2v_x&=q(v),
 \label{eq:ss-system-v}\\
 v_x&=q(v).
 \label{eq:ss-constraint}
\end{align}
The constraint is genuinely propagated.  Indeed, for
$\mathcal C=v_x-q(v)$,
\begin{equation}
 \mathcal C_t+2\mathcal C_x=q'(v)\mathcal C.
 \label{eq:ss-propagation}
\end{equation}
Hence $\mathcal C=0$ is preserved by the full PDE evolution.

The entropy pair generated by the potentials is
\begin{equation}
 h=\frac12(u^2+v^2),
 \qquad
 h^1=\frac16u^3+\frac12u^2+v^2-\frac14v,
 \label{eq:ss-entropy}
\end{equation}
and
\begin{equation}
 dh^1-U\cdot dF^1
 =-\left(\frac54u+\frac14\right)dv
 =V\,dR.
 \label{eq:ss-genuine}
\end{equation}
On the constitutive domain
\begin{equation}
 \mathcal D=\{(u,v):-1\le u\le1,\ -1\le v\le0\},
 \label{eq:ss-domain}
\end{equation}
the entropy production is
\begin{equation}
 \Sigma(U)=U\cdot f(U)+V(U)q(v)
 =q(v)\left(v-\frac14u-\frac14\right)\le0.
 \label{eq:ss-Sigma}
\end{equation}
Thus the entropy inequality is a constitutive property on the whole domain,
not merely along the travelling orbit.  At the reference equilibrium
$U_0=(0,0)$,
\begin{equation}
 q_{,U}(U_0)=0,
 \qquad
 V_0=-\frac14\ne0,
 \label{eq:ss-degenerate}
\end{equation}
so the constraint is genuinely rank deficient there.

Let $\phi=x-st$ and suppose that the travelling profile is nonconstant in
$v$.  Equations \eqref{eq:ss-system-v} and \eqref{eq:ss-constraint} give
\begin{equation}
 (2-s)\dot v=q(v),
 \qquad
 \dot v=q(v),
 \end{equation}
whence the compatibility condition selects the unique speed
\begin{equation}
 s=1.
 \label{eq:ss-speed}
\end{equation}
The first balance law then reduces to
\begin{equation}
 \frac12u\dot u=-\frac14q(v),
 \end{equation}
and, since $\dot v=q(v)$,
\begin{equation}
 u\frac{du}{dv}=-\frac12,
 \qquad
 u^2+v=C.
 \label{eq:ss-firstintegral}
\end{equation}
For the Rankine--Hugoniot leaf $C=0$, the upper branch
\begin{equation}
 u(v)=\sqrt{-v},\qquad -1\le v\le0
 \label{eq:ss-smoothbranch}
\end{equation}
provides a completely smooth heteroclinic connection
\begin{equation}
 U_1=(1,-1)\longrightarrow U_0=(0,0).
 \label{eq:ss-endstates}
\end{equation}
The characteristic level $\lambda_1=s$ is $u=0$ and is reached by the smooth
branch only at the equilibrium endpoint $U_0$.  Hence no internal sub-shock is
forced.

The approach to $U_0$ is algebraic.  Setting $\eta=-v$ gives
\begin{equation}
 \dot\eta=-\eta^2(1-\eta)^2=-\eta^2+\Ocal(\eta^3),
 \label{eq:ss-eta}
\end{equation}
so that
\begin{equation}
 -v(\phi)\sim\frac1\phi,
 \qquad
 u(\phi)=\sqrt{-v(\phi)}\sim\frac1{\sqrt\phi},
 \qquad \phi\to+\infty.
 \label{eq:ss-algebraic}
\end{equation}
Thus the compatible spatial linear rate vanishes at the reference equilibrium,
as required by the degenerate-tail alternative of
Theorem~\ref{thm:2x2rigidity}.

The same Rankine--Hugoniot leaf also admits entropy-admissible composite
profiles.  For example, at
\begin{equation}
 v_*=-\frac14,
 \qquad
 U_*^- =\left(\frac12,-\frac14\right),
 \qquad
 U_*^+ =\left(-\frac12,-\frac14\right),
 \label{eq:ss-jumpstates}
\end{equation}
the Rankine--Hugoniot condition for \eqref{eq:ss-conservation} holds at
$s=1$.  Moreover,
\begin{equation}
 \lambda_1(U_*^+)=\frac34<s=1<\frac54=\lambda_1(U_*^-),
 \qquad
 s<\lambda_2=2,
 \label{eq:ss-Lax}
\end{equation}
and
\begin{equation}
 [h^1]-s[h]= -\frac1{24}<0.
 \label{eq:ss-entropyjump}
\end{equation}
Hence the jump is a strictly entropy-admissible Lax sub-shock.  After the jump,
the lower branch $u=-\sqrt{-v}$ provides a smooth continuation from $U_*^+$
to the same equilibrium state $U_0$.

\begin{figure}[H]
 \centering
 \includegraphics[width=0.72\textwidth]{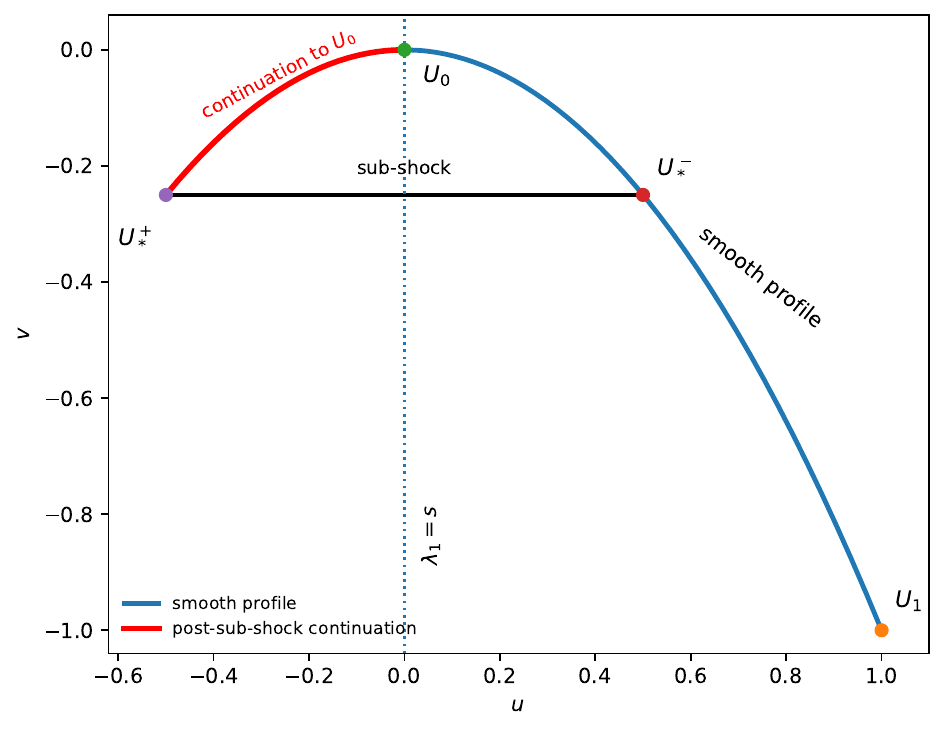}
 \caption{Phase portrait on the Rankine--Hugoniot leaf $u^2+v=0$.  The upper branch gives the completely smooth connection from $U_1=(1,-1)$ to $U_0=(0,0)$.  The composite profile follows the upper branch to $U_*^-=(1/2,-1/4)$, undergoes the entropy-admissible Lax sub-shock to $U_*^+=(-1/2,-1/4)$, and then follows the red lower branch continuously to $U_0$.  The characteristic level $\lambda_1=s$ is $u=0$ and is reached by the smooth branch only at $U_0$.}
 \label{fig:ss-phase}
\end{figure}

\subsection{Entropy uniqueness and selection of the composite profiles}
\label{sec:ss-selection}

The apparent coexistence of a smooth profile and a profile containing a
sub-shock does not imply nonuniqueness of the evolutionary problem.  Indeed,
the constraint and the second balance law give
\begin{equation}
 v_t+v_x=0.
 \label{eq:ss-vtransport}
\end{equation}
Hence, in the moving coordinate $y=x-t$,
\begin{equation}
 v(y,t)=\bar v(y),
 \qquad
 \bar v'=q(\bar v).
 \label{eq:ss-vbar}
\end{equation}
Using the constraint in the first balance law then yields the scalar equation
\begin{equation}
 u_t+\partial_y\left(\frac14u^2\right)
 =-\frac14q(\bar v(y))
 =-\frac14\bar v_y(y).
 \label{eq:ss-scalar}
\end{equation}
Thus the constrained Cauchy problem reduces exactly to a scalar balance law
with a prescribed smooth source.  For each bounded initial datum in the usual
entropy class, the scalar entropy solution is unique \cite{Kruzhkov1970}; for two solutions with
the same prescribed $\bar v$, the source cancels in the standard contraction
argument, so that, whenever the difference is integrable,
\begin{equation}
 \|u_1(t)-u_2(t)\|_{L^1}
 \le
 \|u_{1,\mathrm{in}}-u_{2,\mathrm{in}}\|_{L^1}.
 \label{eq:ss-L1contraction}
\end{equation}
Consequently the nonuniqueness concerns only the travelling-wave boundary-value
problem with prescribed end states, not the entropy Cauchy problem.

A stationary entropy solution of \eqref{eq:ss-scalar} satisfies
\begin{equation}
 \frac14u^2+\frac14\bar v=\text{constant}.
\end{equation}
For the end states \eqref{eq:ss-endstates} the constant is zero.  Defining
\begin{equation}
 A(y):=\sqrt{-\bar v(y)},
 \label{eq:ss-A}
\end{equation}
we therefore obtain the smooth stationary profile
\begin{equation}
 U_\infty(y)=A(y)
 \label{eq:ss-Winf}
\end{equation}
and, for every $a\in\mathbb R$, the composite stationary profile
\begin{equation}
 U_a(y)=
 \begin{cases}
 A(y),&y<a,\\[1mm]
 -A(y),&y>a.
 \end{cases}
 \label{eq:ss-Wa}
\end{equation}
At $y=a$, if $v_*=\bar v(a)$ and
$\alpha_*=\sqrt{-v_*}$, then
\begin{equation}
 u^-=\alpha_*>0>u^+=-\alpha_*.
 \label{eq:ss-generalLax}
\end{equation}
The stationary Rankine--Hugoniot condition is automatic because
$(u^-)^2=(u^+)^2$, while the scalar characteristic speed $u/2$ changes from
positive to negative across the jump.  In the original variables the entropy
jump is
\begin{equation}
 [h^1]-[h]
 =-\frac13\alpha_*^3<0,
 \label{eq:ss-generalentropyjump}
\end{equation}
so every member of \eqref{eq:ss-Wa} is strictly entropy admissible.

Thus the end states and the speed do not determine a unique travelling
structure.  By contrast, a prescribed compatible initial datum together with
the entropy condition determines a unique evolution.  The smooth profile and
the one-parameter family of composite profiles are distinct stationary initial
data, not different solutions issued from the same Cauchy datum.  The remaining
selection question is asymptotic: which stationary member is approached by a
given class of nearby initial data.

\paragraph{Numerical illustration.}
To illustrate the distinction without modifying the first-order model, we
consider the entropy Cauchy problem \eqref{eq:ss-scalar} in the moving
coordinate $\phi=y=x-t$.  Both computations start from \emph{smooth} compatible
initial data with the same asymptotic equilibrium states $U_1=(1,-1)$ and
$U_0=(0,0)$, but the two data are deliberately chosen to produce visibly
different evolutions.  With $A(\phi)=\sqrt{-\bar v(\phi)}$ and
$\bar v(0)=-1/4$, the first datum is a smooth deformation of the upper
stationary branch,
\begin{equation}
 u_{\rm in}^{(a)}(\phi)
 =A(\phi)\left[1-\delta\exp\!\left(-\frac{(\phi+4)^2}{16}\right)\right],
 \qquad \delta=0.22.
 \label{eq:ss-num-smoothIC}
\end{equation}
The second datum is also completely smooth, but is compressive and connects
the upper and lower branches,
\begin{equation}
 u_{\rm in}^{(b)}(\phi)=-A(\phi)\tanh\!\left(\frac{\phi}{2}\right).
 \label{eq:ss-num-shockIC}
\end{equation}
Thus no discontinuity is inserted at $t=0$ in either computation.
The numerical solution is obtained with a first-order monotone finite-volume
Rusanov scheme \cite{Rusanov1961}, using $\Delta\phi=0.005$ and CFL number
$0.45$.  The numerical viscosity is only that of the discretization and is not
an additional constitutive regularization.

\begin{figure}[H]
 \centering
 \includegraphics[width=0.96\textwidth]{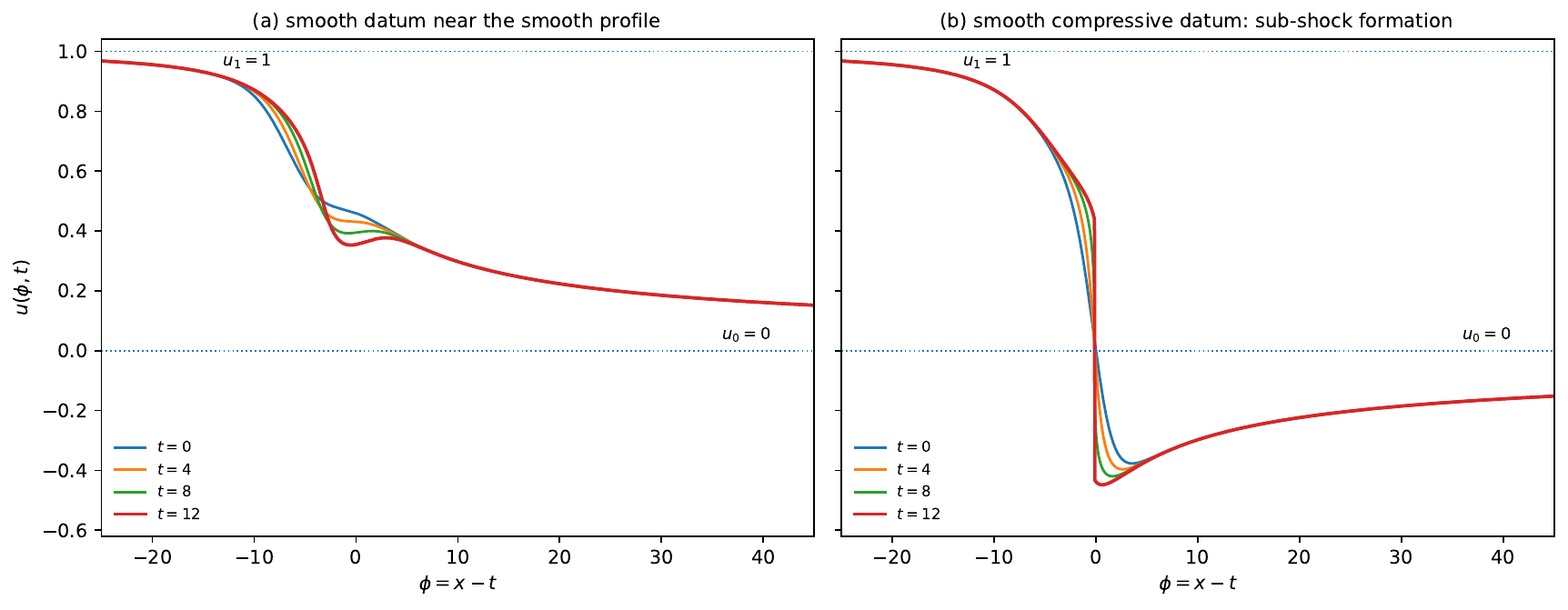}
 \caption{Evolution of $u(\phi,t)$ for two smooth compatible Cauchy data with the same asymptotic equilibrium states.  Panel (a) starts from the smooth deformation \eqref{eq:ss-num-smoothIC}; the solution remains smooth over the displayed time interval.  Panel (b) starts from the smooth compressive datum \eqref{eq:ss-num-shockIC}; the compression steepens and develops a sharp entropy front, numerically representing the formation of a sub-shock.  Curves are shown at $t=0,4,8,12$.}
 \label{fig:ss-numerics}
\end{figure}

The second computation is particularly useful for the interpretation of the
travelling solutions: a sub-shock need not be inserted in the initial datum.
A smooth compatible Cauchy datum may generate it dynamically.  This does not
contradict uniqueness of the entropy Cauchy problem; rather, the initial datum
selects a unique evolution, which may remain smooth or may develop a shock.

\paragraph{Physical interpretation of the bounds.}
The quantity $\lambda_{\max}$ in Theorems~\ref{thm:nondegenerate} and
\ref{thm:homogeneous} is the largest eigenvalue of the constrained symmetric
pencil $A_n-\lambda H$.  For one compatible constraint, Boillat's interlacing
result \cite{Boillat1982} identifies the extreme constrained eigenvalues with
the extreme physical propagation velocities under the usual simple-spectrum
nondegeneracy assumptions.  Theorem~\ref{thm:2x2rigidity} shows that the same
classical local bound survives also for a strongly propagated rank-deficient
scalar constraint in the regular two-field conservative--dissipative regime.

By contrast, the quantity $\Lambda_{\max}(s;E)$ of
Theorem~\ref{thm:degenerate} is a spatial-profile threshold obtained when one
assumes only travelling-wave compatibility.  It remains a valid general
mathematical reduction, but a nonzero quadratic correction $P$ has not
been exhibited here for a genuinely propagated rank-deficient constraint.
The two-field theorem shows why this cannot happen in the regular minimal case.
Whether it can occur for propagated constraints in higher-dimensional systems
with the same conservative--dissipative block structure is an open problem; RET
provides an important physical subclass of such systems.

\section{Conclusions}

The comparison with the unconstrained Boillat--Ruggeri theory is now clear.
The presence of an involutive differential constraint does not in general
replace the largest equilibrium characteristic velocity by another universal
propagation speed.  For homogeneous constraints and for nonhomogeneous
constraints with full-row-rank source Jacobian, the classical necessary
condition survives exactly,
\begin{equation}
 s\le\lambda_{\max}(U_0,n).
 \label{eq:conclusion-classical}
\end{equation}
The longitudinal Gaussian ten-moment plasma with Landau collisions and Gauss's
law gives a physical realization of the full-rank case.

For rank-deficient constraints there are two logically distinct levels.  If one
requires only a travelling profile satisfying the differential constraint, the
conservation laws must first be integrated and the spatial dynamics
restricted to the joint conservation--constraint manifold.  On its stable
tangent space the quadratic contribution of the constraint is described by the
Lyapunov equation
\[
 L_E^{\mathsf T}P+PL_E=Q_E,
\]
and the corresponding necessary condition involves the corrected pencil
$A_E+P-\lambda H_E$.  This correction belongs to the spatial travelling
dynamics; it is not a characteristic velocity of the hyperbolic PDE.

If the constraint is involutive, hence propagated by the full PDE, additional identities appear.  In
the minimal $2\times2$ conservative--dissipative case with one conservation law, one balance law and
one scalar rank-deficient constraint, strong propagation forces the quadratic
constraint term to vanish along every regular noncharacteristic stable tail.
The corresponding quadratic correction is then $P=0$ and the Boillat--Ruggeri bound
is recovered.  Thus the local equilibrium criterion is more robust than the
travelling-wave calculation alone might suggest.

The explicit two-field model of Section~\ref{sec:propagatedexample} shows what
can still be genuinely new.  The source becomes one conservative and one
dissipative equation after a constant nonsingular recombination, the constraint
is exactly propagated by the PDE, and the entropy inequality holds throughout
the constitutive domain.  Compatibility between the balance law and the
constraint selects the unique travelling speed $s=1$.  The smooth
heteroclinic profile approaches the ahead equilibrium only algebraically,
which is precisely the degenerate alternative not covered by the regular
noncharacteristic result of Theorem~\ref{thm:2x2rigidity}.  The same conservation leaf also admits
entropy-admissible Lax composite profiles, but the sub-shock is not forced
because a smooth connection exists between the same end states.  This
coexistence concerns the travelling-wave boundary-value problem and does not
imply nonuniqueness of the evolution: for each compatible initial datum the
reduced scalar entropy problem has a unique solution.  The numerical
experiments further indicate that the distinction is dynamically meaningful:
two different smooth compatible initial data with the same equilibrium end
states can evolve differently, one remaining smooth while the other develops
a compressive sub-shock.

Accordingly, the principal effect of involutive constraints is global rather
than a universal shift of the local characteristic bound.  They restrict the
set of admissible travelling orbits, may select the shock speed, may create
characteristic barriers on individual branches, and may generate several
admissible internal structures even when $s<\lambda_{\max}(U_0)$.  Thus the
end states and propagation speed alone need not determine the internal shock
structure; the initial datum enters the dynamical selection.  In the regular
two-field conservative--dissipative setting, however, the constraints do not defeat the classical
Boillat--Ruggeri upper-speed condition.

The general rank-deficient travelling-wave reduction is therefore complete at the level of local compatibility, whereas the corresponding classification for genuinely propagated constraints remains open beyond the minimal two-field case; this question will be addressed in future work.

The main open question concerns higher-dimensional systems with the
conservative--dissipative block structure and genuinely propagated
rank-deficient constraints.  RET supplies a particularly important physical
class of such systems.  In higher dimension the dissipative block has more
room, the rank-one argument used in Theorem~\ref{thm:2x2rigidity} no longer
applies, and a nonzero quadratic correction determined by the Lyapunov equation may in principle survive.  Other
open regimes are exactly characteristic reductions and degenerate spatial
approaches, which require differential-algebraic or center-manifold methods.

\section*{Acknowledgements}
This work was carried out within the activities of the Gruppo Nazionale per la Fisica Matematica (GNFM) of the Istituto Nazionale di Alta Matematica (INdAM).

\section*{Statements and Declarations}

\textbf{Funding.}
No specific funding was received for this study.

\textbf{Competing interests.}
The authors declare that they have no competing interests.

\textbf{Authors' contributions.}
Both authors contributed equally to all aspects of the work and approved the final manuscript.

\textbf{Data availability.}
No external datasets were used in this study.

\footnotesize
\setlength{\bibsep}{0pt}

\end{document}